\declaretheorem[numberwithin=section]{theorem}
\declaretheorem[name=Lemma, sibling=theorem]{lemma}
\declaretheorem[name=Corollary, sibling=theorem]{corollary}
\declaretheorem[name=Definition, style=definition, sibling=theorem]{definition}
\declaretheorem[name=Remark, style=definition, sibling=theorem]{remark}
\DeclareMathOperator{\Ob}{Ob}
\DeclareMathOperator{\HH}{H}
\DeclareMathOperator{\RR}{R}
\DeclareMathOperator{\reg}{reg}
\newcommand{\kk}{{\mathbf{k}}}
\newcommand{\II}{{\mathscr{I}}}
\newcommand{\FI}{{\mathrm{FI}}}
\newcommand{\VI}{{\mathrm{VI}}}
\newcommand{\VB}{{\mathrm{VB}}}
\newcommand{\UU}{{\mathbf{U}}}
\newcommand{\injdim}{{\mathrm{inj\,dim}}}
\title{Bounds on homological invariants of VI-modules}
\author{Wee Liang Gan}
\address{Department of Mathematics, University of California, Riverside, CA 92521, USA}
\email{wlgan@ucr.edu}
\author{Liping Li}
\address{LCSM (Ministry of Education), School of Mathematics and Statistics, Hunan Normal University, Changsha, Hunan 410081, China}
\email{lipingli@hunnu.edu.cn}
\thanks{L. Li is supported by the National Natural Science Foundation of China 11771135 and the Start-Up Funds of Hunan Normal University 830122-0037. The authors thank Rohit Nagpal for informing us that the approach in \cite{church} cannot directly apply to VI-modules. We also thank the referee for many helpful suggestions to improve the paper.}
\begin{document}

\begin{abstract}
We give bounds for various homological invariants (including Castelnuovo-Mumford regularity, degrees of local cohomology, and injective dimension) of finitely generated VI-modules in the non-describing characteristic case. It turns out that the formulas of these bounds for VI-modules are the same as the formulas of corresponding bounds for FI-modules.
\end{abstract}

\maketitle

\section{Introduction}

Let $\kk$ be a commutative Noetherian ring, and let $F$ be a finite field whose order $q$ is invertible in $\kk$ (called the non-describing characteristic case). The category $\VI$ has the finite dimensional $F$-vector spaces as its objects and the injective $F$-linear maps as its morphisms. By definition, a $\VI$-module is a covariant functor from $\VI$ to the category of $\kk$-modules.

The purpose of this paper is to prove bounds for various homological invariants of finitely generated VI-modules; these homological invariants have been shown to be finite in a recent paper \cite{nagpal} of Nagpal. Surprisingly, though the combinatorial structure and representation theory of VI seem to be more complicated than that of the category FI, whose objects are finite sets and morphisms are injections, homological invariants of $\VI$-modules and $\FI$-modules are bounded by the same formulas. That is, many results on FI-modules proved in \cite{church, ce, gan, li2} can be extended to VI-modules. In particular, we obtain upper bounds of Castelnuovo-Mumford regularity, degrees of local cohomology, and injective dimension of  finitely generated $\VI$-modules $M$, and these upper bounds are in terms of the first two homological degrees of $M$, which measure the degrees of the generators and relations of $M$.

The key ingredients in our arguments are the shift theorem of Nagpal \cite[Theorem 4.34(a)]{nagpal} and (a modification of) the approach for $\FI$-modules described in \cite{li2, ly} (in particular, we avoid an argument in the proof of \cite[Theorem 2.4]{li2} which used a result \cite[Corollary 2.12]{ly} that may not hold for $\VI$-modules). For $\FI$-modules, other approaches have been found by various authors \cite{church, ce,gan}, but at this moment it is not obvious to us if their arguments can also be adapted to $\VI$-modules.

We collect the main results of this paper in the following theorem. For definitions and notations, see the next section.

\begin{theorem}
Let $\kk$ be a commutative Noetherian ring such that $q$ is invertible in $\kk$. Let $M$ be a finitely generated $\VI$-module over $\kk$. Let $t_i(M)$ be the degree of the $i$-th $\VI$-homology $\HH_i^\VI(M)$ of $M$.
Let $h_i(M)$ be the degree of the $i$-th local cohomology $\RR^i \Gamma (M)$ of $M$. Then one has:
\begin{enumerate}[(a)]
\item
 \[ h_i(M) \leqslant
\left\{ \begin{array}{ll}
t_0(M) + t_1(M) - 1  & \mbox{ if } i=0,\\
\min\{ 2t_0(M)-2i,\, t_0(M)+t_1(M) -2i - 1 \} & \mbox{ if } i\geqslant 1.
  \end{array}\right. \]
 Also,
 \[ h_i(M) \leqslant
\left\{ \begin{array}{ll}
 \delta(M) + t_0(M) - 1 & \mbox{ if } i=1, \\
 2\delta(M) -2i +2 & \mbox{ if } i\geqslant 2,
\end{array}\right. \]
where $\delta(M)$ denotes the stable degree of $M$.

\item The Castelnuovo-Mumford regularity of $M$ satisfies:
\begin{align*}
\reg(M) & \leqslant \max\{ h_0(M),\, \min\{ 2 t_0(M)-1,\,
 t_0(M)+t_1(M)-2 \} \};\\
\reg(M) & \leqslant t_0(M) + t_1(M) - 1.
\end{align*}

\item $\Sigma^X M$ is semi-induced if $\dim_F(X) \geqslant t_0(M) + t_1(M)$.

\item If $\kk$ is a field of characteristic 0, then the injective dimension of $M$ satisfies:
\begin{equation*}
\injdim(M) \leqslant \max \{t_0(M), t_0(M) + t_1(M) -1 \}.
\end{equation*}
\end{enumerate}
\end{theorem}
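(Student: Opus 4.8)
The plan is to build an injective coresolution of $M$ of length at most $\max\{t_0(M),\,t_0(M)+t_1(M)-1\}$ by an induction on $t_0(M)$ that strips off shifts of $M$ one at a time. Part (c) and Nagpal's shift theorem \cite[Theorem 4.34(a)]{nagpal} supply the end of the coresolution: the crucial characteristic-zero input is that every finitely generated semi-induced $\VI$-module is injective, and granting this, part (c) gives $\injdim(\Sigma^X M) = 0$ whenever $\dim_F X \geq t_0(M)+t_1(M)$. To prove the semi-induced-implies-injective statement one uses that, when $\kk[GL_n(F)]$ is semisimple, the free $\VI$-modules $M(W)$ are projective, so a semi-induced module is a direct sum of such $M(W)$'s, and then one checks that each $M(W)$ is injective by computing $\mathrm{Ext}^i_\VI(S, M(W))$ for $S$ a simple $\VI$-module, using that shifts of semi-induced modules are semi-induced. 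This is the place where the $\VI$-argument must diverge from that of \cite[Theorem 2.4]{li2}, which relied on \cite[Corollary 2.12]{ly}: in its stead we use Nagpal's shift theorem.

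For the inductive step, assume $M$ is not semi-induced (so $t_1(M)\geq 0$) and put $N = t_0(M)+t_1(M)$. Write $\Sigma^X$ (with $\dim_F X = N$) as the $N$-fold composite of shift functors $\Sigma^Y$ by a $1$-dimensional space $Y$, and set $M^{(j)} = (\Sigma^Y)^{j}M$, so that $M^{(0)} = M$ and $M^{(N)} = \Sigma^X M$ is injective. For each $j<N$ the standard four-term exact sequence
\[
0 \longrightarrow \Delta M^{(j)} \longrightarrow M^{(j)} \longrightarrow M^{(j+1)} \longrightarrow \partial M^{(j)} \longrightarrow 0,
\]
with $\Delta M^{(j)} = \ker(M^{(j)}\to M^{(j+1)})$ torsion and $\partial M^{(j)}$ the derivative, splits into two short exact sequences; running the long exact sequences in $\mathrm{Ext}_\VI(-,-)$ through all of these telescopes to
\[
\injdim(M)\ \leq\ \max_{0\leq j<N}\ \max\bigl\{\,\injdim(\Delta M^{(j)}),\ \injdim(\partial M^{(j)})+1\,\bigr\}.
\]
Since $t_i(\partial L)\leq t_i(L)-1$ and $t_i(\Sigma^Y L)\leq t_i(L)$ for all $L$ and $i$, one gets $t_0(\partial M^{(j)})\leq t_0(M)-1$ and $t_1(\partial M^{(j)})\leq t_1(M)-1$, whence the inductive hypothesis bounds $\injdim(\partial M^{(j)})+1$ by $\max\{t_0(M),\,t_0(M)+t_1(M)-1\}$; and $\Delta M^{(j)}$ is a torsion $\VI$-module supported in degrees $\leq h_0(M^{(j)})\leq h_0(M)\leq t_0(M)+t_1(M)-1$ by part (a), so a separate lemma --- a torsion $\VI$-module supported in degrees $\leq d$ has injective dimension $\leq d$, proved by embedding it into a torsion injective supported in degrees $\leq d$ and inducting on $d$ --- bounds $\injdim(\Delta M^{(j)})$ by the same quantity. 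The base case $t_0(M)=0$ (where $\partial M = 0$ and $M$ is a quotient of a constant $\VI$-module $\underline{M_0}$) is handled separately: comparing $M$ with the constant module on its eventual value splits off an injective free direct summand and leaves a torsion module supported in degrees $\leq t_1(M)-1$, to which the torsion lemma applies.

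I expect the main obstacle to be the characteristic-zero input of the first paragraph --- establishing by hand that semi-induced $\VI$-modules are injective, without the $\FI$-specific \cite[Corollary 2.12]{ly} --- together with verifying in the $\VI$-setting the degree estimates $t_i(\partial L)\leq t_i(L)-1$ and the behaviour of $\Delta(-)$ and $h_0$ under the shift functors, so that the telescoping estimate closes at exactly $\max\{t_0(M),\,t_0(M)+t_1(M)-1\}$ rather than at something larger; this is precisely where the sharp forms of parts (a) and (c) are indispensable.
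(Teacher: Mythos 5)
Your proposal addresses only part (d) of the theorem: parts (a), (b) and (c) are invoked as inputs (you use (c) for the endpoint of the telescope and (a) for the support of the torsion kernels) but never proved, so the bulk of the statement is left open. Within part (d) itself there is also a genuine gap, and it sits exactly at the point you flag as "the main obstacle." Your telescope uses the plain shift $\Sigma^Y$ by a one-dimensional space and its cokernel $\partial = \Delta^Y$, and the induction requires $t_0(\Delta^Y L)\leqslant t_0(L)-1$ and $t_1(\Delta^Y L)\leqslant t_1(L)-1$. In the $\VI$ setting these estimates are only available for the coinvariant functor $\bar\Delta^Y L=(\Delta^Y L)_{\UU_Y}$ (Nagpal's Corollary 4.19 for $t_0$, and Lemma \ref{decrease} of this paper for $t_1$). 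Since the order of $\UU_Y(Z)$ is invertible, $\Delta^Y L$ splits as $\bar\Delta^Y L$ plus the nontrivial $\UU_Y$-isotypic components, and there is no reason the latter are generated in degree $\leqslant t_0(L)-1$; the failure of the plain difference functor to lower degrees is precisely why the $\FI$ argument does not transfer verbatim to $\VI$. The auxiliary estimates you need along the way ($t_1(\Sigma^Y L)\leqslant t_1(L)$ and $h_0(\Sigma^Y L)\leqslant h_0(L)$) are likewise unproved and not formal.

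The repair is structural rather than a verification: one must run the argument with $\bar\Sigma^X$ and $\bar\Delta^X$, and then a single shift by a large $X$ suffices instead of a telescope. Concretely, the paper's proof inducts on $t_0(M)$, splits off $\Gamma(M)$ (whose injective dimension is bounded by $\deg\Gamma(M)=h_0(M)\leqslant t_0(M)+t_1(M)-1$ via the torsion lemma you also use), chooses $X$ with $\bar\Sigma^X M$ semi-induced (hence injective in characteristic zero, by \cite[Theorem 1.5]{gl} rather than a by-hand Ext computation), and applies the induction hypothesis to $\bar\Delta^X M$ in the short exact sequence $0\to M/\Gamma(M)\to\bar\Sigma^X M\to\bar\Delta^X M\to 0$ furnished by Lemma \ref{semi-induced}, using $t_0(\bar\Delta^X M)\leqslant t_0(M)-1$ and $t_1(\bar\Delta^X M)\leqslant t_1(M)-1$. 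Your overall telescoping inequality and the torsion-module lemma are sound, but as written the degree bookkeeping does not close, and parts (a)--(c) still need independent proofs (in the paper these come from Theorem \ref{main theorem}, the complex $I^\bullet$ of Theorem \ref{complex bound}, and Corollary \ref{bound for shift}).
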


From this theorem one can deduce more bounds, all of which are in terms of $t_0(M)$ and $t_1(M)$, for a finitely generated $\VI$-module $M$; for example:
\begin{itemize}
\item
if $\kk$ is a field, then $\dim_\kk(M(F^n))$ is a polynomial in $q^n$ for $n\geqslant t_0(M)+t_1(M)$;

\item
if $\kk$ is an algebraically closed field of characteristic zero, then the sequence $\{ M(F^n)\}$ of $\kk[\mathbf{GL}_n(F)]$-modules is representation stable in the sense of \cite[Definition 1.5]{gw} starting at $n=\max\{ t_0(M)+t_1(M), 2t_0(M) \}$.

\end{itemize}

This paper is, admittedly, not self-contained since our main goal is to explain how the shift theorem in \cite{nagpal} and the approach in \cite{li2, ly} can be used to get explicit bounds for homological invariants of finitely generated $\VI$-modules; we avoid repeating definitions and preliminary results in those papers unless it is necessary. However, we do try to make it readable for anyone who is somewhat familiar with any of the existing papers on homology of $\FI$-modules or $\VI$-modules. It is recommended to read \cite{nagpal} before turning to the present paper.

The paper is organized as follows. In Section 2, we introduce the notations and prove some preliminary results which are used later in the paper. Formulas for upper bounds of Castelnuovo-Mumford regularity and injective dimension are proved in Section 3. In the last section, we recall the construction of a finite complex given by the shift theorem in \cite{nagpal}; it was shown in \cite{nagpal} that this complex computes local cohomology. Using results from the previous sections, we obtain upper bounds of degrees of local cohomology and certain other invariants.

\section{Generalities}

For convenience and to avoid confusion due to differences in notations, we adopt the notations of \cite{nagpal}, but our convention for the degree of the zero $\VI$-module is $-\infty$. In particular, for any $\VI$-module $M$ and $X\in \Ob(\VI)$, we denote:
\begin{tabbing}
\quad\= $M_n$ \;\quad\=: the $\kk[\mathbf{GL}_n(F)]$-module $M(F^n)$; \\

\> $M_{\prec d}$ \>: the smallest $\VI$-submodule of $M$ containing $M_n$ for all $n<d$; \\

\> $\Gamma(M)$ \>: the maximal torsion submodule of $M$; \\

\> $\RR^i\Gamma$ \>: the $i$-th right derived functor of $\Gamma$; \\

\> $h_i(M)$ \>: the degree of $\RR^i \Gamma(M)$; \\

\> $\HH_0^\VI$ \>: the $\VI$-homology functor defined by $\HH_0^\VI(M)(X) = (M/M_{\prec d})(X)$ where $d=\dim_F (X)$;\\

\> $\HH_i^\VI$ \>: the $i$-th left derived functor of $\HH_0^\VI$; \\

\> $t_i(M)$ \>: the degree of $\HH_i^\VI(M)$; \\

\> $\delta(M)$ \>: the stable degree of $M$; \\

\> $\Sigma^X$ \>: the functor defined by $(\Sigma^X M)(Z)=M(X+Z)$ for every $Z\in\Ob(\VI)$; \\

\> $\Delta^X$ \>: the cokernel of the natural transformation $\mathrm{id} \to \Sigma^X$; \\

\> $\UU_X$ \>: the $\VI$-group defined in \cite[\S4.2]{nagpal};\\

\> $\bar\Sigma^X$ \>: the functor defined by $\bar\Sigma^X M = \left( \Sigma^X M \right)_{\UU_X}$; \\

\> $\bar\Delta^X$ \>: the functor defined by $\bar\Delta^X M  = \left( \Delta^X M \right)_{\UU_X}$;\\

\> $\kappa^X$ \>: the kernel of the natural transformation $\mathrm{id} \to \bar\Sigma^X$;\\

\> $\II(V)$ \>: the $\VI$-module induced from a $\VB$-module $V$,
\end{tabbing}
where $\VB$ is the category whose objects are the finite dimensional $F$-vector spaces and whose morphisms are the bijective $F$-linear maps.

Let us mention that $t_i(M)$ here are called \emph{homological degrees} and denoted by $\mathrm{hd}_i(M)$ in \cite{li2, ly}; in particular, $t_0(M)$ coincides with the \emph{generating degree} $\mathrm{gd}(M)=\mathrm{hd}_0(M)$ of $M$. The degree $h_0(M)$ of $\Gamma(M)$ here is called \emph{torsion degree} $\mathrm{td}(M)$ of $M$ in \cite{li2,ly}.

We remind the reader that finitely generated $\VI$-modules are Noetherian (\cite[Theorem A]{ps} and \cite[Corollary 8.3.3]{ss}).

\begin{lemma}  \label{replace}
Let $M$ be a $\VI$-module presented in finite degrees. If $t_1(M)<t_0(M)$, then $M/M_{\prec d}$ is induced from $d$, where $d=t_0(M)$.
\end{lemma}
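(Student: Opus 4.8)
Write $d = t_0(M)$ and $N = M/M_{\prec d}$. The plan is to produce the canonical surjection from the $\VI$-module induced by the degree-$d$ part of $N$ onto $N$, and to use the hypothesis $t_1(M)<d$ to show that this surjection has trivial kernel. I would begin by recording two structural facts about $N$. Since $M_{\prec d}$ contains $M_m$ for every $m<d$, we have $N_m=0$ for $m<d$. Since $M$ is generated in degrees $\leqslant d$ and every image of a piece of $M$ sitting in a degree $<d$ lies in $M_{\prec d}$, the module $N$ is generated in degree $d$. Together these say that $\HH_0^\VI(N)$ is the $\VB$-module concentrated in degree $d$ with value the $\kk[\mathbf{GL}_d(F)]$-module $N_d$.

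The next step is to control $\HH_1^\VI(N)$, and this is where the hypothesis enters. Applying $\HH_\bullet^\VI$ to the short exact sequence $0\to M_{\prec d}\to M\to N\to 0$ yields an exact piece $\HH_1^\VI(M)\to \HH_1^\VI(N)\to \HH_0^\VI(M_{\prec d})$. For $n\geqslant d$ the left-hand term vanishes because $t_1(M)<d$, and the right-hand term vanishes because $M_{\prec d}$ is generated in degrees $<d$; hence $\HH_1^\VI(N)_n=0$ for all $n\geqslant d$.

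To finish, let $V$ be the $\VB$-module concentrated in degree $d$ with $V(F^d)=N_d$. Because $N$ is generated in degree $d$ there is a natural surjection $\pi\colon\II(V)\twoheadrightarrow N$, which in degree $d$ is the identity on $N_d$; since both $\HH_0^\VI(\II(V))$ and $\HH_0^\VI(N)$ are concentrated in degree $d$ with value $N_d$, the map $\HH_0^\VI(\pi)$ is an isomorphism. Writing $K=\ker\pi$, the long exact sequence of $0\to K\to\II(V)\to N\to 0$ gives $\HH_1^\VI(N)\to \HH_0^\VI(K)\to \HH_0^\VI(\II(V))\xrightarrow{\ \HH_0^\VI(\pi)\ }\HH_0^\VI(N)\to 0$, so $\HH_0^\VI(K)$ is a quotient of $\HH_1^\VI(N)$. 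But $\HH_0^\VI(K)$ vanishes in degrees $<d$ (as $K\subseteq\II(V)$ does) and in degrees $\geqslant d$ (as $\HH_1^\VI(N)$ does), so $\HH_0^\VI(K)=0$; since a $\VI$-module with vanishing $\HH_0^\VI$ is zero (look in its lowest nonzero degree), $K=0$ and $\pi$ is an isomorphism. Thus $N\cong\II(V)$ is induced from $d$.

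The only place where any real work is needed is the second step: one must check that $\HH_1^\VI(N)$ is annihilated in every degree $\geqslant d$, and this relies precisely on the strict inequality $t_1(M)<t_0(M)$ together with the fact that $M_{\prec d}$ is generated below degree $d$. The remaining steps are formal once one invokes the standard properties $\HH_0^\VI\circ\II=\mathrm{id}$ and that $\II$ takes a $\VB$-module concentrated in degree $d$ to a $\VI$-module concentrated in degrees $\geqslant d$.
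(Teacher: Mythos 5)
Your proposal is correct and follows essentially the same route as the paper's proof: both first use the long exact sequence of $0\to M_{\prec d}\to M\to M/M_{\prec d}\to 0$ together with $t_1(M)<d$ to see that $\HH_1^\VI(M/M_{\prec d})$ vanishes in degrees $\geqslant d$, and then compare $M/M_{\prec d}$ with the induced module on its degree-$d$ part, showing the kernel of the canonical surjection is zero because its $\HH_0^\VI$ vanishes both below degree $d$ and in degrees $\geqslant d$. The only difference is notational (your $N$, $K$ are the paper's $Q$, $N$).
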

\begin{proof}
Let $Q=M/M_{\prec d}$. The short exact sequence $0\to M_{\prec d}\to M\to Q\to 0$ induces a long exact sequence $\cdots \to \HH_1^\VI(M) \to \HH_1^\VI(Q) \to \HH_0^\VI(M_{\prec d}) \to \cdots$, so $t_1(Q)<d$.

Let $P=\II(Q_d)$ and let $N$ be the kernel of the natural epimorphism $P\to Q$. The short exact sequence $0\to N \to P \to Q \to 0$ induces a long exact sequence
\begin{equation*}
\cdots \to \HH_1^\VI(Q) \to \HH_0^\VI (N) \to \HH_0^\VI(P) \to \HH_0^\VI(Q) \to 0.
\end{equation*}
Since the map $\HH_0^\VI(P) \to \HH_0^\VI(Q)$ is an isomorphism, the map $\HH_1^\VI(Q) \to \HH_0^\VI (N)$ is surjective, so $t_0(N)\leqslant t_1(Q)< d$. But $N_n=0$ for every $n< d$. Therefore $N=0$, so $P$ is isomorphic to $Q$.
\end{proof}

Given a VI-module $M$ and $X\in \Ob(\VI)$, we have an exact sequence:
\begin{equation*}
0 \to \kappa^X M \to M \to \bar{\Sigma}^X M \to \bar{\Delta}^X M \to 0.
\end{equation*}
The analogue of this sequence has played a central role in the representation theory of FI, and as we will see, it also plays a crucial role for us to obtain bounds of homological invariant of VI-modules. The next lemmas give us some preliminary results related to this sequence.

\begin{lemma} \label{decrease}
Let $M$ be a $\VI$-module presented in finite degrees. For any nonzero $X\in \Ob(\VI)$, one has: $t_1( \bar\Delta^X M ) \leqslant t_1(M)-1$.
\end{lemma}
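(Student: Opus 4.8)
The plan is to argue by induction on $t_0(M)$, distinguishing the cases $t_1(M)\geqslant t_0(M)$ and $t_1(M)<t_0(M)$, and to use as a black box the analogous statement for the generation degree, namely $t_0(\bar\Delta^X N)\leqslant t_0(N)-1$ for every nonzero $X$ and every $\VI$-module $N$ presented in finite degrees. I would first record three preliminary facts. Since $q$ --- equivalently the characteristic of $F$ --- is invertible in $\kk$, each group $\UU_X(Z)$ has order invertible in $\kk$, so $(-)_{\UU_X}$ is exact; combined with the exactness of $\Sigma^X$ this makes $\bar\Sigma^X$ exact and $\bar\Delta^X$ right exact. From \cite{nagpal} I would take that $\kappa^X$ vanishes on semi-induced modules and that $\bar\Delta^X$ sends semi-induced modules to semi-induced modules, so in particular $\HH_1^\VI(\bar\Delta^X P)=0$ whenever $P$ is semi-induced. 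Finally, the generation-degree statement itself follows from right-exactness and additivity of $\bar\Delta^X$ together with the computation in \cite{nagpal} that, for a $\VB$-module $V$ concentrated in a degree $d\geqslant 1$, the module $\bar\Delta^X\II(V)$ is semi-induced and generated in degrees $\leqslant d-1$ (and is zero if $d=0$): one pushes this through a surjection onto $N$ from an induced module generated in degrees $\leqslant t_0(N)$.

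In the case $t_1(M)\geqslant t_0(M)$, I would take the $\VB$-module $V$ with $V(F^n)=M_n$ for $n\leqslant t_0(M)$ and $V(F^n)=0$ otherwise, together with the natural epimorphism $\II(V)\to M$; its kernel $K$ is finitely generated by Noetherianity. The long exact sequence of $\HH_\bullet^\VI$, together with $\HH_1^\VI(\II(V))=0$, exhibits $\HH_0^\VI(K)$ as an extension of a submodule of $V$ by $\HH_1^\VI(M)$, so $t_0(K)\leqslant\max\{t_0(M),t_1(M)\}=t_1(M)$. Applying the right-exact $\bar\Delta^X$ to $0\to K\to\II(V)\to M\to 0$ gives an exact sequence $\bar\Delta^X K\to\bar\Delta^X\II(V)\to\bar\Delta^X M\to 0$ with $\bar\Delta^X\II(V)$ semi-induced; writing $J$ for the image of the first map, the long exact sequence of $\HH_\bullet^\VI$ for $0\to J\to\bar\Delta^X\II(V)\to\bar\Delta^X M\to 0$ and the vanishing of $\HH_1^\VI(\bar\Delta^X\II(V))$ give $t_1(\bar\Delta^X M)\leqslant t_0(J)\leqslant t_0(\bar\Delta^X K)\leqslant t_0(K)-1\leqslant t_1(M)-1$.

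In the case $t_1(M)<t_0(M)$, write $d=t_0(M)$ and $Q=M/M_{\prec d}$, which is induced by Lemma~\ref{replace}; in particular $\kappa^X Q=0$ and $\bar\Delta^X Q$ is semi-induced. Applying the snake lemma to $0\to M_{\prec d}\to M\to Q\to 0$ and the exact functors $\mathrm{id}$ and $\bar\Sigma^X$, the connecting map $\kappa^X Q\to\bar\Delta^X M_{\prec d}$ is zero since $\kappa^X Q=0$, so $0\to\bar\Delta^X M_{\prec d}\to\bar\Delta^X M\to\bar\Delta^X Q\to 0$ is exact; as $\HH_1^\VI(\bar\Delta^X Q)=0$, this yields $t_1(\bar\Delta^X M)\leqslant t_1(\bar\Delta^X M_{\prec d})$. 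Now $M_{\prec d}$ is presented in finite degrees with $t_0(M_{\prec d})<t_0(M)$, and the vanishing of $\HH_i^\VI(Q)$ for $i\geqslant 1$ forces $\HH_i^\VI(M_{\prec d})\cong\HH_i^\VI(M)$ for $i\geqslant 1$, so $t_1(M_{\prec d})=t_1(M)$; the inductive hypothesis applied to $M_{\prec d}$ then gives $t_1(\bar\Delta^X M_{\prec d})\leqslant t_1(M)-1$, closing the induction (the first case uses no inductive hypothesis and in particular covers $M=0$).

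The point where most care is needed is that $\bar\Delta^X$ is only right exact, so one cannot simply take homology of the four-term sequence $0\to\kappa^X M\to M\to\bar\Sigma^X M\to\bar\Delta^X M\to 0$; the two-case split is exactly what sidesteps this. In the first case $\bar\Delta^X$ is applied only on the right of a short exact sequence whose kernel already has the correct generation degree, and in the second case the vanishing of $\kappa^X$ on the induced module $Q$ provided by Lemma~\ref{replace} is what restores left-exactness of $\bar\Delta^X$ on the short exact sequence $0\to M_{\prec d}\to M\to Q\to 0$. The two inputs that carry the weight are both drawn from \cite{nagpal}: that $\bar\Delta^X$ preserves semi-induced modules, and the explicit degree drop by one in $\bar\Delta^X\II(V)$.
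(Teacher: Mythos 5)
Your proof is correct and follows essentially the same route as the paper: the main case $t_0(M)\leqslant t_1(M)$ is the paper's argument verbatim (your $K$, $\II(V)$, $J$ are the paper's $N$, $P$, $K$), and your second case is a more carefully justified version of the paper's terse reduction via Lemma \ref{replace}. Your explicit use of the snake lemma and $\kappa^X Q=0$ to get exactness of $0\to\bar\Delta^X M_{\prec d}\to\bar\Delta^X M\to\bar\Delta^X Q\to 0$ supplies a detail the paper leaves implicit.
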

\begin{proof}
By Lemma \ref{replace}, if $t_1(M)<t_0(M)$, we may replace $M$ by $M_{\prec d}$ where $d=t_0(M)$. Repeating this if necessary, we may assume that $t_0(M)\leqslant t_1(M)$.

Let $0 \to N \to P \to M \to 0$ be a short exact sequence where $P$ is an induced $\VI$-module with $t_0(P)=t_0(M)$.  Then $t_0(N)\leqslant t_1(M)$. We get an exact sequence
\begin{equation*}
\bar\Delta^X N \to \bar\Delta^X P \to \bar\Delta^X M \to 0.
\end{equation*}
By \cite[Corollary 4.19]{nagpal}, one has $t_0(\bar\Delta^X N) \leqslant t_0(N)-1$. Let $K$ be the kernel of the map $\bar\Delta^X P \to \bar\Delta^X M$. Then
\begin{equation*}
t_0(K) \leqslant t_0(\bar\Delta^X N) \leqslant t_0(N)-1 \leqslant t_1(M)-1.
\end{equation*}
The short exact sequence $0\to K \to \bar\Delta^X P \to \bar\Delta^X M \to 0$ gives a long exact sequence
\begin{equation*}
\cdots \to \HH_1^\VI(\bar\Delta^X P) \to \HH_1^\VI (\bar\Delta^X M ) \to \HH_0^\VI (K) \to \cdots.
\end{equation*}
Since $\bar\Delta^X P$ is induced, we have $\HH_1^\VI(\bar\Delta^X P)=0$, so
$t_1(\bar\Delta^X M) \leqslant t_0(K) \leqslant t_1(M)-1$.
\end{proof}

As in \cite{nagpal}, we drop the superscript $X$ in $\Sigma^X$, $\Delta^X$, $\bar\Sigma^X$, $\bar\Delta^X$ and $\kappa^X$ when $\dim_F (X)=1$.

By definition, $\kappa^X M$ is the kernel of the natural map $M \to \bar{\Sigma}^X M$. The next lemma asserts that $\kappa^X M$ coincides with the kernel of another natural map $M \to \Sigma^X M$.

\begin{lemma} \label{kernel}
For any $X\in \Ob(\VI)$, the kernel of the natural transformation $\mathrm{id} \to \Sigma^X$ is $\kappa^X$.
\end{lemma}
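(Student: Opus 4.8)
The plan is to unwind both maps at the level of the $\VB$-module underlying $M$ and compare their kernels fiberwise. Recall that $\bar\Sigma^X M = (\Sigma^X M)_{\UU_X}$, so the natural transformation $\mathrm{id}\to\bar\Sigma^X$ factors as the composite $M \to \Sigma^X M \to (\Sigma^X M)_{\UU_X}$, where the first arrow is the natural transformation $\mathrm{id}\to\Sigma^X$ from the statement we want and the second is the quotient by the $\UU_X$-coinvariants. Write $\iota\colon M\to\Sigma^X M$ and $\pi\colon \Sigma^X M \to \bar\Sigma^X M$ for these, so $\kappa^X = \ker(\pi\circ\iota)$ by definition and we must show $\kappa^X = \ker\iota$. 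The inclusion $\ker\iota \subseteq \ker(\pi\circ\iota)$ is automatic; the substance is the reverse inclusion, i.e. that if $\iota(m)$ lies in the kernel of $\pi$ then already $\iota(m)=0$.

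First I would pin down the map $\iota$ concretely. For $Z\in\Ob(\VI)$ of dimension $n$, the map $\iota_Z\colon M(Z)\to M(X\oplus Z)$ is $M$ applied to the canonical inclusion $Z\hookrightarrow X\oplus Z$; since this inclusion is a split injection with a canonical retraction $X\oplus Z\to Z$, the map $\iota_Z$ is itself split injective as a map of $\kk$-modules (indeed $M$ of the retraction gives a one-sided inverse). Hence $\iota$ is already injective as a map of $\VI$-modules, which forces $\ker\iota=0$; what remains is to check that $\kappa^X$ is also zero, equivalently that $\pi$ is injective. But that is false in general, so the correct reading is that the composite $\pi\circ\iota$ has the same kernel as $\iota$ because the image of $\iota$ meets $\ker\pi$ only in $0$; the point is that $\ker\pi$ consists of elements of $\Sigma^X M$ killed by the projection to $\UU_X$-coinvariants, and one shows $\iota(M(Z))$ injects into the coinvariants. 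Concretely: an element of $(\Sigma^X M)(Z) = M(X\oplus Z)$ maps to zero in the coinvariants iff it lies in the augmentation submodule $\sum_{g\in\UU_X(Z)}(g-1)\cdot M(X\oplus Z)$; I would use the description of $\UU_X$ in \cite[\S4.2]{nagpal} as the $\VI$-group whose value at $Z$ is the group of $F$-linear automorphisms of $X\oplus Z$ fixing $X$ pointwise and inducing the identity on the quotient $(X\oplus Z)/X \cong Z$ — the "unipotent" group. The key computation is that such a $g$ acts on $\iota_Z(M(Z))$ — the image of $M$ of the split inclusion $Z\hookrightarrow X\oplus Z$ — trivially: $g$ applied to that inclusion differs from the inclusion by an endomorphism that factors through $X$, and one checks via the functoriality of $M$ and the defining relations (or directly, using that $g$ acts as identity on the sub $X$ and on the quotient) that $g\circ(\text{incl}) = (\text{incl})$ up to a morphism that $M$ sends to something already in the image, so the augmentation part contributes nothing new.

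So the argument splits into: (1) factor $\mathrm{id}\to\bar\Sigma^X$ through $\mathrm{id}\to\Sigma^X$; (2) identify $\UU_X(Z)$ explicitly and its action on $(\Sigma^X M)(Z)=M(X\oplus Z)$; (3) show the canonical inclusion $Z\hookrightarrow X\oplus Z$ is $\UU_X$-fixed up to morphisms that do not enlarge the relevant image, so that $\iota_Z$ composed with passage to coinvariants is still injective, giving $\ker(\pi_Z\circ\iota_Z) = \ker\iota_Z$ for all $Z$; (4) conclude $\kappa^X = \ker\iota$ as $\VI$-modules. I expect step (3) — verifying that the $\UU_X$-action is trivial on the image of $\iota_Z$, or more precisely that the composite $M(Z)\to M(X\oplus Z)\to (M(X\oplus Z))_{\UU_X}$ remains injective — to be the main obstacle, since it requires a careful bookkeeping of how elements of the unipotent group $\UU_X(Z)$ act on morphisms into $X\oplus Z$ and then translating this through the functor $M$; everything else is formal diagram-chasing once the definitions from \cite{nagpal} are on the table.
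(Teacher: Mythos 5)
Your overall strategy --- factoring $\mathrm{id}\to\bar\Sigma^X$ as $M\xrightarrow{\;\iota\;}\Sigma^X M\xrightarrow{\;\pi\;}(\Sigma^X M)_{\UU_X}$ and showing that the image of $\iota$ meets $\ker\pi$ trivially --- is the right one, but the execution has two genuine gaps. First, the claim that $\iota_Z\colon M(Z)\to M(X\oplus Z)$ is split injective because the inclusion $Z\hookrightarrow X\oplus Z$ admits a retraction is false: that retraction is not an injective linear map, hence is not a morphism of $\VI$, and $M$ cannot be applied to it. In fact $\iota$ is typically not injective; its kernel is exactly the interesting object here (for $\dim_F X=1$ its degree computes $h_0(M)$ by Lemma~\ref{torsion degree}). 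The passage where you deduce $\ker\iota=0$, notice this cannot be right, and then backtrack should be deleted rather than reinterpreted --- the premise it rests on is simply wrong.

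Second, and more importantly, the step you defer to a ``key computation'' is not the actual crux, and the actual crux is absent. What must be shown is that the $\UU_X(Z)$-invariants of $M(X\oplus Z)$ meet the augmentation submodule $\sum_{g}(g-1)\,M(X\oplus Z)$ only in $0$, i.e.\ that invariants inject into coinvariants. Knowing that $\UU_X(Z)$ fixes the image of $\iota_Z$ pointwise does not give this: for a general finite group $G$ and $\kk[G]$-module $N$, the intersection $N^G\cap\ker(N\to N_G)$ can be nonzero. The lemma is rescued precisely by the standing non-describing-characteristic hypothesis: $|\UU_X(Z)|$ is a power of $q$, hence invertible in $\kk$, so the averaging idempotent splits $N$ as $N^G\oplus\ker(N\to N_G)$ and the invariants map isomorphically onto the coinvariants. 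You never invoke this hypothesis, and without it the statement is false. (A smaller point: $\UU_X(Z)$ consists of automorphisms of $X\oplus Z$ fixing $Z$ pointwise and inducing the identity on the quotient by $Z$, not the other way around as you describe; with the correct group one has $g\circ\mathrm{incl}_Z=\mathrm{incl}_Z$ on the nose, so the image of $\iota_Z$ is literally fixed and no bookkeeping ``up to a morphism factoring through $X$'' is needed.) The paper's proof is exactly these two observations: $\UU_X$-equivariance of $\iota$ with trivial action on the source, plus invertibility of $|\UU_X(Z)|$ in $\kk$.
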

\begin{proof}
Let $M$ be any $\VI$-module. The $\UU_X$-action on $M$ is trivial and the natural map $M \to \Sigma^X M$ is $\UU_X$-equivariant. Since the order of the group $\UU_X(Z)$ is invertible in $\kk$ for every $Z\in \Ob(\VI)$, the lemma follows.
\end{proof}

We now consider $\kappa M$. By the previous lemma, $\kappa M$ is also the kernel of the natural map $M \to \Sigma M$. Explicitly, for any object $Z$ of $\VI$, the value $(\kappa M) (Z)$ of $\kappa M$ on $Z$ consists of those elements $v \in M(Z)$ such that $\iota_* (v) = 0$ for the standard inclusion $\iota: Z \to X + Z$ (where $X+Z$ is the direct sum of $Z$ with the one-dimensional $F$-vector space $X$ used to define the shift functor $\Sigma$). Since the group $\mathbf{GL} (X + Z)$ acts transitively from the left on the set of morphisms from $Z$ to $X+Z$, one knows that:
\begin{equation*}
(\kappa M)(Z) = \{ v \in M(Z) \mid f_* (v)  = 0 \mbox{ for every morphism } f: Z \to X+Z \}.
\end{equation*}
Therefore, $(\kappa M)(Z)$ is a $\VI$-submodule of $M$, and one has the following decomposition:
\begin{equation*}
\kappa M = \bigoplus _{Z \in \Ob (\mathrm{VI})} (\kappa M)(Z)
\end{equation*}
as VI-modules. From this observation, one deduces the following simple but key fact. (A similar fact for FI-modules was shown in \cite[Lemma 2.1]{li2}.)

\begin{lemma} \label{torsion degree}
For any $\VI$-module $M$, one has $h_0(M) = t_0(\kappa M)$.
\end{lemma}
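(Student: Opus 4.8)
The plan is to show both inequalities $h_0(M) \leqslant t_0(\kappa M)$ and $t_0(\kappa M) \leqslant h_0(M)$ by exploiting the decomposition $\kappa M = \bigoplus_{Z} (\kappa M)(Z)$ established just above, together with the basic fact that the torsion submodule $\Gamma(M)$ consists precisely of those elements killed by some morphism. First I would recall that $\Gamma(M)(Z)$ is the set of $v \in M(Z)$ such that $f_*(v) = 0$ for \emph{some} injective map $f \colon Z \to W$ with $\dim_F W$ large enough; equivalently, since any such $f$ factors through a single standard inclusion $Z \to X + Z$ followed by further inclusions, $v$ is torsion iff it is killed by the composite of finitely many one-dimensional shifts. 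Thus $\Gamma(M) = \bigcup_{m \geqslant 0} \ker\left( M \to \Sigma^m M\right)$, where $\Sigma^m$ denotes the $m$-fold composite of the one-dimensional shift $\Sigma$.

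For the inequality $t_0(\kappa M) \leqslant h_0(M)$: the computation of $(\kappa M)(Z)$ given in the excerpt shows that every element of $\kappa M$ is killed by a morphism, hence $\kappa M \subseteq \Gamma(M)$, so $t_0(\kappa M) \leqslant t_0(\Gamma(M)) = h_0(M)$ (the last equality because $\Gamma(M)$ is torsion, so its degree equals the degree of its generators; more precisely a torsion $\VI$-module $T$ satisfies $t_0(T) = \deg T = h_0(T)$ when viewed via $h_0(M) = \deg \Gamma(M) = \deg \Gamma(\Gamma(M))$). For the reverse inequality $h_0(M) \leqslant t_0(\kappa M)$: I would argue that $\Gamma(M)$ is generated in degrees $\leqslant t_0(\kappa M)$. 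Take a nonzero element $v \in \Gamma(M)(Z)$ with $\dim_F Z = d$ maximal among degrees of a minimal generating set of $\Gamma(M)$; it suffices to produce a nonzero element of $\kappa M$ in degree $\leqslant d$ whenever $\Gamma(M)$ is nonzero in degree $d$ and not generated in lower degrees. Since $v$ is torsion, $\Sigma^m_*(v) = 0$ for some $m \geqslant 1$; choosing $m$ minimal, the image $w = \Sigma^{m-1}_*(v)$ lies in $(\Sigma^{m-1} M)(Z)$, is nonzero, and is killed by the one-step shift, so $w \in \kappa(\Sigma^{m-1} M)(Z)$. The key point is then to transport this back to $M$ itself: one uses that $\Sigma^{m-1} M$ and $M$ agree on sufficiently large objects, or more cleanly, that the decomposition $\kappa M = \bigoplus_Z (\kappa M)(Z)$ lets one read off $t_0(\kappa M)$ directly as the largest $d$ with $(\kappa M)(F^d) \neq 0$, and relate this to where new torsion generators of $M$ first appear.

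The cleanest route, and the one I would actually write, avoids the shift bookkeeping: since $\kappa M$ is a direct sum of $\VI$-modules supported in single degrees, $t_0(\kappa M) = \max\{ \dim_F Z \mid (\kappa M)(Z) \neq 0\}$. On the other hand, $\Gamma(M)$ is a nonzero torsion module precisely when $M$ has a nonzero element killed by some morphism, and among all elements of $\Gamma(M)$ one may pick one in the top generating degree $h_0(M)$; pushing it forward by shifts until it first dies yields an element of $(\kappa M)(Z)$ for some $Z$ of dimension exactly $h_0(M)$, since shifting by $\Sigma$ does not change the underlying object $Z$ on which the element lives. This gives $h_0(M) \leqslant t_0(\kappa M)$, and combined with $\kappa M \subseteq \Gamma(M)$ it yields equality. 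The main obstacle I anticipate is the careful verification that ``the object on which a torsion element lives is unchanged by the shift functor,'' i.e.\ that the witness in $\kappa M$ genuinely sits in degree equal to $h_0(M)$ and not something smaller — this requires knowing that $\Gamma(M)$ and $\Gamma(\Sigma^{m-1}M)$ have matching generators in the relevant degree, which should follow from the explicit description of $\kappa M$ as a degreewise direct sum together with the fact that $\Gamma$ commutes suitably with $\Sigma$. The analogous statement for $\FI$-modules in \cite[Lemma 2.1]{li2} is proved by exactly this kind of argument, so I would expect the $\VI$-case to go through with only notational changes.
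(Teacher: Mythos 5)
Your overall strategy --- use the explicit description of $(\kappa M)(Z)$, the degreewise direct-sum decomposition of $\kappa M$, the inclusion $\kappa M \subseteq \Gamma(M)$ for one inequality, and a top-degree torsion witness for the other --- is exactly the route behind the paper's (very terse) proof. However, two steps as written are problematic. First, your justification of $t_0(\kappa M)\leqslant h_0(M)$ leans on the claim that a torsion module $T$ satisfies $t_0(T)=\deg T$, which is false in general: a torsion module can be generated in degree $0$ and still be nonzero in degree $5$, so one only has $t_0(T)\leqslant\deg T$. Moreover $t_0$ is not monotone under passage to submodules, so $\kappa M\subseteq\Gamma(M)$ does not by itself give $t_0(\kappa M)\leqslant t_0(\Gamma(M))$. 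The correct chain, whose ingredients you already have, is: $t_0(\kappa M)=\deg(\kappa M)$ because $\kappa M$ is a direct sum of submodules each concentrated in a single degree, and $\deg(\kappa M)\leqslant\deg(\Gamma(M))=h_0(M)$ because $\kappa M\subseteq\Gamma(M)$.

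Second, the ``main obstacle'' you flag for $h_0(M)\leqslant t_0(\kappa M)$ is illusory, and the fix you propose (``shifting by $\Sigma$ does not change the underlying object on which the element lives'') is not correct as stated, since $(\Sigma^{m-1}M)(Z)=M((m-1)X+Z)$ lives in degree $\dim_F Z+m-1$. The clean resolution is to choose the witness in the top degree from the start: let $d=h_0(M)=\deg\Gamma(M)$ (assume finite; the infinite case is handled by pushing any torsion element forward until just before it dies, producing elements of $\kappa M$ in arbitrarily high degrees) and pick a nonzero $v\in\Gamma(M)(Z)$ with $\dim_F Z=d$. For any morphism $f\colon Z\to X+Z$ the image $f_*(v)$ is again torsion but sits in degree $d+1>\deg\Gamma(M)$, hence is zero; so $v\in(\kappa M)(Z)$ already, your $m$ equals $1$, and no transport between $M$ and $\Sigma^{m-1}M$ is needed. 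With these two repairs your argument closes and coincides with the paper's.
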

\begin{proof}
By Lemma \ref{kernel} and the above observation, one has $h_0(M) = \deg(\kappa M) = t_0(\kappa M)$.
\end{proof}

We also need the following technical lemma to establish our main results.

\begin{lemma} \label{semi-induced}
Let $M$ be a $\VI$-module and $X\in\Ob(\VI)$. If $\bar\Sigma^X M$ is semi-induced, then:
\begin{align*}
\kappa^X M &= \Gamma(M),\\
t_1(\bar\Delta^X M) &\leqslant t_0(M),\\
t_{i+1} (\bar\Delta^X M) &= t_i (M/\Gamma(M)) \quad\mbox{ for each } i\geqslant 1.
\end{align*}
\end{lemma}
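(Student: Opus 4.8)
The plan is to run everything off the four-term exact sequence $0 \to \kappa^X M \to M \to \bar\Sigma^X M \to \bar\Delta^X M \to 0$ together with two standard properties of semi-induced $\VI$-modules recorded in \cite{nagpal}: such modules are torsion-free, and their $i$-th $\VI$-homology vanishes for every $i\geqslant 1$.

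First I would prove $\kappa^X M = \Gamma(M)$. The inclusion $\kappa^X M \subseteq \Gamma(M)$ holds without any hypothesis: by Lemma~\ref{kernel}, $(\kappa^X M)(Z)$ consists of the $v\in M(Z)$ killed by every morphism $Z\to X+Z$, and factoring an arbitrary morphism $Z\to F^n$ with $n>\dim_F(Z)$ through an object of dimension $\dim_F(Z)+1$ shows that the $\VI$-submodule generated by such a $v$ vanishes in all degrees $>\dim_F(Z)$, hence $v\in\Gamma(M)(Z)$. For the reverse inclusion I would use the hypothesis: the composite $\Gamma(M)\hookrightarrow M\to\bar\Sigma^X M$ is a map of $\VI$-modules from a torsion module to a torsion-free one, hence is zero, so $\Gamma(M)\subseteq\ker(M\to\bar\Sigma^X M)=\kappa^X M$.

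Granting this, the four-term sequence collapses to the short exact sequence $0\to M/\Gamma(M)\to\bar\Sigma^X M\to\bar\Delta^X M\to 0$, since the image of $M$ in $\bar\Sigma^X M$ equals both $M/\kappa^X M$ and the kernel of $\bar\Sigma^X M\to\bar\Delta^X M$. Taking the long exact sequence in $\VI$-homology and using $\HH_j^\VI(\bar\Sigma^X M)=0$ for $j\geqslant 1$, I obtain an injection $\HH_1^\VI(\bar\Delta^X M)\hookrightarrow\HH_0^\VI(M/\Gamma(M))$ and isomorphisms $\HH_{i+1}^\VI(\bar\Delta^X M)\cong\HH_i^\VI(M/\Gamma(M))$ for $i\geqslant 1$. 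The injection gives $t_1(\bar\Delta^X M)\leqslant t_0(M/\Gamma(M))\leqslant t_0(M)$, the last inequality because $\HH_0^\VI(M)\twoheadrightarrow\HH_0^\VI(M/\Gamma(M))$; the isomorphisms give the asserted equalities of degrees.

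The argument is essentially bookkeeping once the two properties of semi-induced modules are in hand, so the only thing needing care is to cite those facts from \cite{nagpal} in the right form and to check that the four-term sequence splits as claimed, which is immediate from exactness. I do not anticipate a genuine obstacle here.
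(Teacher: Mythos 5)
Your proof is correct and follows essentially the same route as the paper's: both collapse the four-term sequence to $0\to M/\Gamma(M)\to\bar\Sigma^X M\to\bar\Delta^X M\to 0$ and read off the degree bounds from the homology long exact sequence, using that a semi-induced module is torsion-free and homology acyclic. The only (immaterial) difference is in establishing $\kappa^X M=\Gamma(M)$: the paper applies left-exactness of $\kappa^X$ to $0\to\Gamma(M)\to M\to M/\Gamma(M)\to 0$, whereas you verify the two inclusions directly, but both hinge on the same torsion-freeness of $\bar\Sigma^X M$.
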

\begin{proof}
 By \cite[Proposition 4.27 and Corollary 4.22]{nagpal}, we have $\bar\Sigma^X \Gamma(M) = \Gamma (\bar\Sigma^X M) = 0$. Hence,  $\kappa^X \Gamma(M) = \Gamma(M)$. Since $M/\Gamma(M)$ is torsion-free, the map $M/\Gamma(M) \to \Sigma^X (M/\Gamma(M))$ is injective, so by \cite[Lemma 4.7]{nagpal}, the map $M/\Gamma(M) \to \bar\Sigma^X ( M/\Gamma(M) )$ is injective. Hence, $\kappa^X (M/\Gamma(M)) = 0$. Since $\kappa^X$ is a left exact functor, it follows that $\kappa^X M = \Gamma (M)$.

By \cite[Proposition 3.10]{nagpal}, we know that $\bar\Sigma^X M$ is homology acyclic. The short exact sequence
\begin{equation*}
0\to M/\Gamma(M) \to \bar\Sigma^X M \to \bar\Delta^X M\to 0\
\end{equation*}
gives a long exact sequence:
\begin{equation*}
\cdots  \to 0 \to \HH_2^\VI(\bar\Delta^X M) \to \HH_1^\VI(M/\Gamma(M)) \to 0 \to \HH_1^\VI (\bar\Delta^X M) \to \HH_0^\VI(M/\Gamma(M)) \to \cdots.
\end{equation*}
Hence, $t_1(\bar\Delta^X M)\leqslant t_0(M/\Gamma(M))\leqslant t_0(M)$, and $t_{i+1} (\bar\Delta^X M) = t_i (M/\Gamma(M))$ for each $i\geqslant 1$.
\end{proof}

\section{Bounds on regularity and injective dimensions}

The main task of this section is to prove formulas for upper bounds of Castelnuovo-Mumford regularity and, when $\kk$ is a field of characteristic zero, a formula for the injective dimension. First, let us make the following definition:

\begin{definition}
For any $\VI$-module $M$, the \emph{Castelnuovo-Mumford regularity} of $M$, denoted by $\reg(M)$, is the supremum of $\{t_i(M)-i : i\geqslant 1\}$.
\end{definition}

An upper bound (in terms of $t_0(M)$ and $t_1(M)$) of Castelnuovo-Mumford regularity of FI-modules presented in finite degrees was first obtained by Church and Ellenberg in \cite{ce}. In \cite{li2}, the second author provided a new proof as well as another upper bound in terms of $t_0(M)$ and $h_0(M)$. Subsequently, an alternative proof was given by the first author in \cite{gan}. Motivated by the work of \cite{li2} and \cite{ly}, in a short note \cite{church}, Church simplified the original proof in \cite{ce}. The following theorem asserts that all these upper bounds for FI-modules also hold for finitely generated VI-modules. Its proof uses the shift theorem of Nagpal \cite{nagpal} for finitely generated $\VI$-modules; we do not know if the shift theorem and our results below can be extended to $\VI$-modules presented in finite degrees.

\begin{theorem} \label{main theorem}
Let $M$ be a finitely generated $\VI$-module. Then:
\begin{align*}
h_0(M) &\leqslant t_0(M) + t_1(M) - 1,\\
\reg(M) &\leqslant \max\{ h_0(M),\, \min\{ 2 t_0(M)-1,\,
 t_0(M)+t_1(M)-2 \} \},\\
\reg(M) &\leqslant t_0(M) + t_1(M) - 1.
\end{align*}
\end{theorem}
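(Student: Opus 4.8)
The plan is to run an induction on $t_0(M)$ (equivalently on the generating degree), using the four-term exact sequence
$0 \to \kappa^X M \to M \to \bar\Sigma^X M \to \bar\Delta^X M \to 0$
for a well-chosen $X$ — by Nagpal's shift theorem, if $\dim_F(X)$ is large enough then $\bar\Sigma^X M$ is semi-induced, and then Lemma \ref{semi-induced} applies. First I would reduce to the case where $t_0(M) \leqslant t_1(M)$: if $t_1(M) < t_0(M)$, Lemma \ref{replace} says $M/M_{\prec d}$ is induced (with $d = t_0(M)$), so $\reg$ and $h_0$ can be read off from the short exact sequence $0 \to M_{\prec d} \to M \to M/M_{\prec d} \to 0$, and $M_{\prec d}$ has strictly smaller generating degree; this both starts the induction and lets us assume $t_0(M) \leqslant t_1(M)$ for the main argument. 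The base case is roughly $t_0(M) = 0$ (or $M$ torsion/concentrated in one degree), where the bounds are immediate.

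For the inductive step, choose $X$ with $\dim_F(X)$ large (so $\bar\Sigma^X M$ is semi-induced), and split the four-term sequence into $0 \to \kappa^X M \to M \to M/\kappa^X M \to 0$ and $0 \to M/\kappa^X M \to \bar\Sigma^X M \to \bar\Delta^X M \to 0$. By Lemma \ref{semi-induced}, $\kappa^X M = \Gamma(M)$, so $M/\kappa^X M = M/\Gamma(M)$, and the second sequence together with $\bar\Sigma^X M$ being homology-acyclic gives $t_{i+1}(\bar\Delta^X M) = t_i(M/\Gamma(M))$ for $i \geqslant 1$ and $t_1(\bar\Delta^X M) \leqslant t_0(M)$. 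By Nagpal's generation bound \cite[Corollary 4.19]{nagpal} (as used in Lemma \ref{decrease}) one has $t_0(\bar\Delta^X M) \leqslant t_0(M) - 1$, and by Lemma \ref{decrease} one has $t_1(\bar\Delta^X M) \leqslant t_1(M) - 1$. Since $t_0(\bar\Delta^X M)$ has dropped, apply the induction hypothesis to $\bar\Delta^X M$: this bounds $h_0(\bar\Delta^X M)$ and $\reg(\bar\Delta^X M)$ in terms of $t_0(\bar\Delta^X M) \leqslant t_0(M)-1$ and $t_1(\bar\Delta^X M) \leqslant \min\{t_0(M), t_1(M)-1\}$. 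Translating back: $\reg(M/\Gamma(M)) = \sup_{i \geqslant 1} t_i(M/\Gamma(M)) - i = \sup_{i \geqslant 1} t_{i+1}(\bar\Delta^X M) - i = \reg(\bar\Delta^X M) + 1$ (modulo the $t_1$ term, which is handled by the inequality $t_1(\bar\Delta^X M)\leqslant t_0(M)$), which feeds the numbers through; and $h_0(M) = t_0(\kappa^X M) = t_0(\Gamma(M))$ is controlled via Lemma \ref{torsion degree} and the first sequence, where $h_0(M) = h_0(M/\kappa^X M)$'s torsion plus the contribution from $\bar\Delta^X M$ through $\RR^1\Gamma$, arranged so that $h_0(M) \leqslant h_0(\bar\Delta^X M) + 1 \leqslant (t_0(\bar\Delta^X M) + t_1(\bar\Delta^X M) - 1) + 1 \leqslant t_0(M) + t_1(M) - 1$.

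Finally, combine: the torsion part $\Gamma(M)$ contributes $h_0(M)$ to $\reg(M)$ via $t_i(\Gamma(M)) - i$, and the torsion-free part contributes $\reg(M/\Gamma(M))$, so $\reg(M) \leqslant \max\{h_0(M), \reg(M/\Gamma(M))\}$, and substituting the inductive bounds yields both $\reg(M) \leqslant \max\{h_0(M), \min\{2t_0(M)-1, t_0(M)+t_1(M)-2\}\}$ and $\reg(M) \leqslant t_0(M)+t_1(M)-1$. The main obstacle I anticipate is the careful bookkeeping of which of $t_0(M)$, $t_1(M)-1$, $2t_0(M)$ governs each term after the degree shift — in particular making the two competing bounds ($2t_0-1$ versus $t_0+t_1-2$) propagate correctly through the induction, since $\bar\Delta^X$ shifts $t_0$ down by one but only shifts $t_1$ down by one as well, so the "$2t_0$" bound and the "$t_0+t_1$" bound both need to be carried simultaneously in the induction hypothesis rather than separately; a secondary subtlety is justifying that $\dim_F(X)$ can be chosen uniformly large enough (via the shift theorem) without disturbing the finite generation needed to even define $h_i$.
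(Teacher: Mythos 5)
Your regularity argument (split off $\Gamma(M)$, use $\reg(M/\Gamma(M))\leqslant\reg(\bar\Delta^X M)+1$ via Lemma \ref{semi-induced}, and induct on $t_0$) is essentially the paper's argument, up to the minor point that the inequality $\reg(\Gamma(M))\leqslant h_0(M)$ is not automatic and is quoted in the paper from \cite[Corollary 5.1]{li1}. The genuine problem is your treatment of the first inequality, $h_0(M)\leqslant t_0(M)+t_1(M)-1$, which is also needed to deduce the third inequality from the second.

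Your proposed route is $h_0(M)\leqslant h_0(\bar\Delta^X M)+1$, extracted from the four-term sequence $0\to\kappa^X M\to M\to\bar\Sigma^X M\to\bar\Delta^X M\to 0$ with $X$ large. This inequality is false, and the sequence cannot produce it. When $\bar\Sigma^X M$ is semi-induced, applying $\Gamma$ to $0\to M/\Gamma(M)\to\bar\Sigma^X M\to\bar\Delta^X M\to 0$ identifies $\Gamma(\bar\Delta^X M)$ with $\RR^1\Gamma(M)$, so $h_0(\bar\Delta^X M)$ controls $h_1(M)$, not $h_0(M)$. Concretely, if $M$ is a nonzero torsion module then $\bar\Delta^X M=0$ for $\dim_F(X)$ large while $h_0(M)=\deg(M)\geqslant 0$, so $h_0(M)\leqslant h_0(\bar\Delta^X M)+1=-\infty$ fails. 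More structurally, the four-term sequence exhibits $\kappa^X M=\Gamma(M)$ only as a \emph{submodule} of $M$; nothing in that sequence surjects onto it, so it yields no upper bound on $t_0(\kappa^X M)=h_0(M)$. The paper's actual argument is different: it takes the one-dimensional shift, a presentation $0\to N\to P\to M\to 0$ with $P$ induced, and Nagpal's exact sequence $0\to\kappa M\to\bar\Delta N\to\bar\Delta P\to\bar\Delta M\to 0$ (\cite[Proposition 4.17]{nagpal}, using $\kappa P=0$). Here $\kappa M$ sits at the left end of a resolvable complex, and the two homology long exact sequences give
\begin{equation*}
t_0(\kappa M)\leqslant\max\{t_0(\bar\Delta N),\,t_1(\bar\Delta N/\kappa M)\}\leqslant\max\{t_0(N)-1,\,t_2(\bar\Delta M)\},
\end{equation*}
after which $t_2(\bar\Delta M)\leqslant\reg(\bar\Delta M)+2$ is bounded by the \emph{regularity} statement of the induction hypothesis applied to $\bar\Delta M$. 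So the $h_0$ bound and the $\reg$ bound must be proved by a joint induction, with the regularity bound for smaller $t_0$ feeding the torsion bound at the current stage; your plan keeps them separate and has no mechanism for bounding $\deg\Gamma(M)$ from above. You would need to either adopt the paper's snake-lemma sequence or supply some other device that presents $\Gamma(M)$ as a quotient of something with controlled generation degree.
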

\begin{proof}
If $M=0$, then $h_0(M)$ and $\reg(M)$ are both $-\infty$, so there is nothing to prove. Suppose $M\neq 0$. We use induction on $t_0(M)$.

By \cite[Proposition 3.8]{nagpal}, there is a short exact sequence $0 \to N \to P \to M \to 0$ such that $P$ is an induced $\VI$-module with $t_0(P)=t_0(M)$. By \cite[Propositions 4.8 and 4.21]{nagpal}, we have $\kappa P = 0$. Therefore, by \cite[Proposition 4.17]{nagpal}, we have an exact sequence:
\begin{equation*}
0 \to \kappa M \to \bar\Delta N \to \bar\Delta P \to \bar\Delta M \to 0.
\end{equation*}
We break this into two short exact sequences:
\begin{gather*}
0 \to \kappa M \to \bar\Delta N \to \bar\Delta N / \kappa M  \to 0,\\
0 \to \bar\Delta N / \kappa M  \to  \bar\Delta P \to \bar\Delta M \to 0.
\end{gather*}
They give two long exact sequences:
\begin{equation} \label{main les 1}
\cdots \to \HH_1^\VI (\bar\Delta N/ \kappa M ) \to \HH_0^\VI (\kappa M) \to \HH_0^\VI (\bar\Delta N) \to \cdots,
\end{equation}
\begin{equation} \label{main les 2}
\cdots  \to \HH_2^\VI (\bar\Delta M) \to \HH_1^\VI (\bar\Delta N /\kappa M) \to 0 \to \cdots,
\end{equation}
where we used \cite[Propositions 3.4 and 4.5]{nagpal} to see that $\HH_1^\VI (\bar\Delta P) = 0$. By \cite[Corollary 4.19]{nagpal}, we have:
\begin{gather}
t_0(\bar\Delta N) \leqslant t_0(N)-1 \leqslant \max\{t_0(M), t_1(M)\} -1, \label{ineq 1}\\
t_0(\bar\Delta M) \leqslant t_0(M)-1. \label{ineq 2}
\end{gather}
By Lemma \ref{decrease}, we have:
\begin{equation}
t_1(\bar\Delta M) \leqslant t_1(M)-1. \label{ineq 3}
\end{equation}
Hence,
\begin{align*}
h_0(M)
&= t_0(\kappa M)  & \mbox{by Lemma \ref{torsion degree}} \\
&\leqslant \max\{ t_0(\bar\Delta N), t_1(\bar\Delta N / \kappa M)\} & \mbox{by \eqref{main les 1}}   \\
&\leqslant \max\{ t_0(M)-1,\, t_1(M)-1,\, t_2(\bar\Delta M)  \}   & \mbox{by \eqref{main les 2} and \eqref{ineq 1}} \\
&\leqslant \max\{ t_0(M)-1,\, t_1(M)-1,\, \reg(\bar\Delta M) + 2 \} \\
&\leqslant \max\{ t_0(M)-1,\, t_1(M)-1,  t_0(\bar\Delta M) + t_1(\bar\Delta M) + 1 \} & \mbox{by induction hypothesis}\\
&\leqslant \max\{ t_0(M)-1,\, t_1(M)-1,\, t_0(M)+t_1(M) -1 \}  & \mbox{by \eqref{ineq 2} and \eqref{ineq 3}} \\
&\leqslant t_0(M) + t_1(M) -1.
\end{align*}

Next, by \cite[Theorem 4.34(a)]{nagpal}, if $X$ is a finite dimensional $F$-vector space whose dimension is sufficiently large, then $\bar\Sigma^X M$ is semi-induced; we choose and fix such a nonzero $X$. From the long exact sequence associated to the short exact sequence $0\to \Gamma(M) \to M \to M/\Gamma(M) \to 0$, we see that:
\begin{equation*}
\reg(M) \leqslant \max\{ \reg(\Gamma(M)),  \reg(M/\Gamma(M)) \}.
\end{equation*}
By \cite[Corollary 5.1]{li1}, we have:
\begin{equation*}
\reg( \Gamma(M) ) \leqslant h_0(M).
\end{equation*}
By \cite[Corollary 4.19]{nagpal}, we have:
\begin{equation} \label{ineq 5}
t_0(\bar\Delta^X M) \leqslant t_0(M)-1.
\end{equation}
Hence,
\begin{align*}
\reg( M/\Gamma(M)) & \leqslant \reg(\bar\Delta^X M) + 1 & \mbox{by Lemma \ref{semi-induced}} \\
& \leqslant t_0(\bar\Delta^X M) + t_1(\bar\Delta^X M)  & \mbox{by induction hypothesis}  \\
& \leqslant \min\{ 2 t_0(M)-1,\,
 t_0(M)+t_1(M)-2 \} & \mbox{by Lemma \ref{decrease}, Lemma \ref{semi-induced} and \eqref{ineq 5}}.
\end{align*}
It follows from the above that:
\begin{equation} \label{ineq 6}
\reg(M) \leqslant \max\{ h_0(M),\, \min\{ 2 t_0(M)-1,\,
 t_0(M)+t_1(M)-2 \} \}.
\end{equation}
Finally, since we have $h_0(M) \leqslant t_0(M) + t_1(M) -1$,
it follows from \eqref{ineq 6} that
\[ \reg(M) \leqslant t_0(M) + t_1(M) - 1. \]
\end{proof}

\begin{remark}
The finiteness of Castelnuovo-Mumford regularity of finitely generated VI-modules in non-describing characteristic case was proved by Nagpal in \cite[Theorem 5.13]{nagpal}. Furthermore, he gave an upper bound of this invariant in terms of the degrees of the local cohomology of $M$ which in general are hard to obtain.
\end{remark}

\begin{remark}
Let us also mention that for a closely related category $\mathrm{VIC}$, Miller and Wilson \cite[Theorem 2.26]{mw} proved an upper bound for the degrees of syzygies of $\mathrm{VIC}$-modules over a field of characteristic zero; their bound grow exponentially.
\end{remark}

In the rest of this section, we study the injective dimension of finitely generated VI-modules when $\kk$ is a field of characteristic zero. For any $\VI$-module $M$, denote by $\injdim(M)$ the injective dimension of $M$. We adopt the convention that the zero module has injective dimension $-\infty$.

First, we consider the special case of finitely generated torsion $\VI$-modules $M$; in this special case, there is a non-negative integer $N$ such that $M(Z)=0$ for all objects $Z$ with $\dim_F(Z) > N$.

\begin{lemma} \label{id for torsion}
Assume that $\kk$ is a field of characteristic zero. Let $M$ be a finitely generated torsion $\VI$-module. Then $\injdim (M) \leqslant \deg(M)$.
\end{lemma}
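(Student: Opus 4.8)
The plan is to induct on $\deg(M)$. If $\deg(M) = -\infty$ then $M = 0$ and there is nothing to prove, so assume $\deg(M) = N \geqslant 0$. Let $M_{\prec N} \subseteq M$ be the submodule generated by $M_n$ for $n < N$; then $Q = M/M_{\prec N}$ is concentrated in degree $N$, meaning $Q_n = 0$ for $n \neq N$, and $Q_N$ is a $\kk[\mathbf{GL}_N(F)]$-module. Since $M_{\prec N}$ is again a finitely generated torsion module with $\deg(M_{\prec N}) \leqslant N - 1$, the long exact sequence in $\mathrm{Ext}$ coming from $0 \to M_{\prec N} \to M \to Q \to 0$ reduces the problem, via the inductive hypothesis applied to $M_{\prec N}$, to showing that $\injdim(Q) \leqslant N$ for a module $Q$ concentrated in a single degree $N$.

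For the single-degree case, the key point is that $\kk$ is a field of characteristic zero, so $\kk[\mathbf{GL}_N(F)]$ is semisimple, and hence the $\mathbf{GL}_N(F)$-module $Q_N$ is both projective and injective. One then wants to identify an injective resolution of $Q$ as a $\VI$-module. The natural candidate for the injective envelope of a degree-$N$ module is built from the functors $\Sigma^X$ (or their right adjoints): there should be a $\VI$-module $E^0$, built by coinducing $Q_N$ along $\VI$ in an appropriate way, which is injective and receives $Q$ as a submodule, and whose cokernel is again a torsion module concentrated in degrees $> N$ but of strictly smaller "codegree" complexity. More precisely, I expect to use the standard adjunction between restriction to the full subcategory of $N$-dimensional objects and its right Kan extension; because $\mathbf{GL}_N(F)$-modules over a characteristic-zero field have injective dimension zero, the resulting coinduced module $E^0$ is $\VI$-injective. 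Iterating, the cokernels $E^i/\mathrm{im}$ at each stage remain torsion with top degree increasing by at most one per step while the bottom degree also increases, so the resolution terminates after at most $N$ further steps, giving $\injdim(Q) \leqslant N$. Combined with the inductive hypothesis $\injdim(M_{\prec N}) \leqslant N - 1 \leqslant N$, the long exact sequence yields $\injdim(M) \leqslant N = \deg(M)$.

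The step I expect to be the main obstacle is establishing that the coinduced (right Kan extended) module $E^0$ is genuinely $\VI$-injective and that the cokernel of $Q \hookrightarrow E^0$ is again torsion with the correct degree bounds — i.e. pinning down exactly how the top degree of the successive cokernels grows and verifying that it is bounded so the resolution has length $\leqslant N$. This requires some care about the combinatorics of $\VI$: unlike $\FI$, where the analogous statement is classical, one must check that coinducing an injective $\mathbf{GL}_N(F)$-module produces a module whose behavior in degrees $> N$ is controlled, which presumably follows from Nagpal's structural results together with the fact that the relevant induction/coinduction functors are exact in this characteristic. Once the single-degree injective envelope is understood, the dévissage along $0 \to M_{\prec N} \to M \to Q \to 0$ and the bookkeeping with $\deg$ are routine.
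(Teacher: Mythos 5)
Your overall strategy---resolving a torsion module by coinduced modules, using that every $\kk[\mathbf{GL}_N(F)]$-module is injective when $\kk$ has characteristic zero and that the right Kan extension (being right adjoint to the exact restriction functor) preserves injectives---is the same mechanism as the construction the paper cites, namely the proof of Lemma 3.2 of Gan--Li--Xi, which produces an exact sequence $0\to M\to E^0\to\cdots\to E^n\to 0$ of finitely generated torsion injectives with $\deg(E^i)\leqslant \deg(M)-i$. However, your degree bookkeeping is wrong at two points, and the second one invalidates your termination argument. First, the claim $\deg(M_{\prec N})\leqslant N-1$ is false in general: $M_{\prec N}$ is \emph{generated} in degrees $<N$ but may well be nonzero in degree $N$, so its degree can equal $N$ and the inductive hypothesis does not apply to it. The d\'evissage should go the other way: take as the submodule the set $M^{(N)}$ of elements of $M$ lying in degree $N$ (this is a submodule precisely because $M$ vanishes above degree $N$), which is concentrated in degree $N$, and whose quotient $M/M^{(N)}$ has degree $\leqslant N-1$ and is handled by induction.

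Second, and more seriously, the right Kan extension of $Q_N$ from the full subcategory on $F^N$ is $E^0(Z)=\mathrm{Hom}_{\mathbf{GL}_N(F)}\bigl(\kk[\mathrm{Hom}_\VI(Z,F^N)],Q_N\bigr)$, which is supported in degrees $\leqslant N$ (there are no morphisms $Z\to F^N$ once $\dim_F Z>N$) and agrees with $Q_N$ in degree $N$. Hence the cokernel of $Q\hookrightarrow E^0$ is supported in degrees $\leqslant N-1$, not in degrees $>N$ as you assert. Your stated termination mechanism (``top degree increasing by at most one per step while the bottom degree also increases'') is not a valid reason for the resolution to stop: a torsion module supported in ever higher degrees carries no bound on injective dimension in terms of $N$. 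The correct mechanism is that the degree of the successive cokernels strictly \emph{decreases}, which is exactly the inequality $\deg(E^i)\leqslant\deg(M)-i$ furnished by the cited construction; with the two corrections above, your argument becomes an unwinding of that construction and does close up by induction on $\deg$.
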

\begin{proof}
As shown in the proof of \cite[Lemma 3.2]{glx}, there exists an exact sequence
\begin{equation*}
0 \to M \to E^0 \to E^1 \to \cdots \to E^n \to 0
\end{equation*}
where each $E^i$ is a finitely generated torsion injective $\VI$-module such that $\deg(E^i) \leqslant \deg(M)-i$. Since $E^i=0$ if $i>\deg(M)$, the exact sequence implies that $\injdim (M)\leqslant \deg(M)$.
\end{proof}

Now we are ready to extend the result of \cite[Corollary 3.7]{li2} for FI-modules to VI-modules.

\begin{corollary}
Assume that $\kk$ is a field of characteristic zero. Let $M$ be a finitely generated $\VI$-module. Then:
\begin{align}
\injdim (M) &\leqslant \max\{ h_0(M),\, t_0(M),\, 2 t_0(M) -1  \},  \label{id1} \\
\injdim (M) &\leqslant \max\{ t_0(M),\, t_0(M) + t_1(M) - 1 \}.  \label{id2}
\end{align}
\end{corollary}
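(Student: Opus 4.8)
The plan is to induct on $t_0(M)$, mirroring the structure of the proof of Theorem~\ref{main theorem}, and to reduce to the torsion case handled by Lemma~\ref{id for torsion}. First I would dispose of $M=0$ trivially. Assuming $M\neq 0$, I would split off the torsion: the short exact sequence $0\to\Gamma(M)\to M\to M/\Gamma(M)\to 0$ gives, for injective dimension, the estimate
\[
\injdim(M)\leqslant \max\{\injdim(\Gamma(M)),\,\injdim(M/\Gamma(M))\}
\]
(using the standard long exact sequence argument; one must be slightly careful because $\injdim$ of a submodule and a quotient interact, but the bound $\injdim(M)\le\max\{\injdim(\Gamma M),\injdim(M/\Gamma M)\}$ does hold when $\injdim(M/\Gamma M)$ is finite, which it will be by induction, and in any case $M$ is Noetherian so all these are finite). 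By Lemma~\ref{id for torsion}, $\injdim(\Gamma(M))\leqslant\deg(\Gamma(M))=h_0(M)$. So the whole problem is to bound $\injdim(M/\Gamma(M))$.

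For the torsion-free part I would invoke the shift theorem \cite[Theorem 4.34(a)]{nagpal}: choose a nonzero $X$ of large enough dimension so that $\bar\Sigma^X M$ is semi-induced. Since $\kk$ has characteristic zero, semi-induced VI-modules are injective (this is the VI-analogue of the fact used in \cite{li2}; semi-induced modules are built from the $\II(V)$, which are injective in characteristic zero, and the filtration splits appropriately for injective-dimension purposes — at minimum they are acyclic for $\Gamma$ and have injective dimension zero). By Lemma~\ref{semi-induced}, $\kappa^X M=\Gamma(M)$, so $M/\Gamma(M)\hookrightarrow\bar\Sigma^X M$, and the cokernel is $\bar\Delta^X M$; thus
\[
0\to M/\Gamma(M)\to \bar\Sigma^X M\to\bar\Delta^X M\to 0
\]
is a short exact sequence with the middle term injective. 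Hence $\injdim(M/\Gamma(M))\leqslant \injdim(\bar\Delta^X M)+1$, provided $\bar\Delta^X M\ne 0$ (and $\injdim(M/\Gamma M)=0$ otherwise). Now $t_0(\bar\Delta^X M)\leqslant t_0(M)-1$ by \cite[Corollary 4.19]{nagpal} and \eqref{ineq 5}, so the induction hypothesis applies to $\bar\Delta^X M$. Feeding in \eqref{id1} for $\bar\Delta^X M$, together with $h_0(\bar\Delta^X M)\le t_0(\bar\Delta^X M)+t_1(\bar\Delta^X M)-1$ from Theorem~\ref{main theorem}, $t_1(\bar\Delta^X M)\le t_1(M)-1$ from Lemma~\ref{decrease}, and $t_1(\bar\Delta^X M)\le t_0(M)$ from Lemma~\ref{semi-induced}, a short arithmetic shuffle of the $\max/\min$ gives both \eqref{id1} and \eqref{id2}; in particular for \eqref{id1} one needs $\max\{h_0,t_0,2t_0-1\}$ of $\bar\Delta^X M$, plus $1$, to stay below $\max\{h_0(M),t_0(M),2t_0(M)-1\}$, which works because each summand for $\bar\Delta^X M$ is bounded by the corresponding quantity for $M$ minus (at least) one.

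The main obstacle I anticipate is the claim that in characteristic zero a semi-induced VI-module has injective dimension zero (equivalently, is injective), so that the exact sequence above actually computes injective dimension via $\injdim(M/\Gamma M)\le\injdim(\bar\Delta^X M)+1$. For FI this is classical; for VI one should cite the appropriate statement in \cite{nagpal} or \cite{glx} that $\II(V)$ is injective when $q$ is invertible and $\kk$ is a field of characteristic zero, and that a semi-induced module, being a successive extension of such, is itself injective (extensions of injectives by injectives are injective over a Noetherian category). A secondary, more bookkeeping-type point is the base case of the induction ($t_0(M)=0$): then $M$ is supported in degree $0$ hence torsion, and Lemma~\ref{id for torsion} plus $h_0(M)=\deg(M)=t_0(M)=0$ (or $-\infty$) gives the result directly. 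With those two points in hand the rest is the routine arithmetic that I would not spell out in detail.
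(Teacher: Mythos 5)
Your proposal is correct and follows essentially the same route as the paper's proof: induction on $t_0(M)$, splitting off $\Gamma(M)$ via Lemma \ref{id for torsion}, embedding $M/\Gamma(M)$ into the injective semi-induced module $\bar\Sigma^X M$ (the injectivity you flag as the main point is exactly what the paper cites, namely \cite[Theorem 1.5]{gl}), and then bounding $\injdim(\bar\Delta^X M)+1$ using \eqref{ineq 5}, Lemma \ref{semi-induced}, Lemma \ref{decrease}, Theorem \ref{main theorem}, and the induction hypothesis. The arithmetic you sketch matches the paper's, so no gap remains.
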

\begin{proof}
We use induction on $t_0(M)$. The base case of the zero module is trivial. Suppose $M$ is nonzero. We have:
\begin{equation*}
\injdim(M) \leqslant \max\{  \injdim (\Gamma(M)) ,\,  \injdim (M / \Gamma(M)) \}.
\end{equation*}
By Lemma \ref{id for torsion},
\begin{equation*}
\injdim (\Gamma(M)) \leqslant h_0(M).
\end{equation*}
By \cite[Theorem 4.34(a)]{nagpal}, we can choose a nonzero $X\in \Ob(\VI)$ such that $\bar\Sigma^X M$ is semi-induced. Since $\kk$ is a field characteristic zero, $\bar\Sigma^X M$ is injective (by \cite[Theorem 1.5]{gl}). By Lemma \ref{semi-induced}, we have a short exact sequence  $0\to M/\Gamma(M) \to \bar\Sigma^X M \to \bar\Delta^X M \to 0$. Hence,
\begin{align*}
\injdim (M/\Gamma(M))  &\leqslant \max\{  0, \injdim (\bar\Delta^X M) + 1  \} \\
&\leqslant \max\{ t_0(M),  \injdim (\bar\Delta^X M) + 1 \}.
\end{align*}
By \cite[Corollary 4.19]{nagpal}, $t_0(\bar\Delta^X M) \leqslant t_0(M)-1$. By Lemma \ref{semi-induced}, $t_1(\bar\Delta^X M) \leqslant t_0(M)$. By Theorem \ref{main theorem},
\begin{equation*}
h_0(\bar\Delta^X M) \leqslant t_0(\bar\Delta^X M) + t_1(\bar\Delta^X M) -1 \leqslant 2 t_0(M)  - 2.
\end{equation*}
Hence, by induction hypothesis, we have:
\begin{align*}
\injdim (\bar\Delta^X M) +1 & \leqslant \max\{ h_0(\bar\Delta^X M), t_0(\bar\Delta^X M),  2t_0(\bar\Delta^X M) - 1 \} +1 \\
&\leqslant \max\{ t_0(M), 2t_0(M) - 1 \}.
\end{align*}
Putting the above inequalities together, we deduce \eqref{id1}.

By induction hypothesis, we also have:
\begin{align*}
\injdim (\bar\Delta^X M) +1 & \leqslant \max\{
t_0(\bar\Delta^X M) + 1, t_0(\bar\Delta^X M) + t_1(\bar\Delta^X M)\} \\
& \leqslant \max\{ t_0(M), t_0(M) + t_1(M) -2 \},
\end{align*}
where we used Lemma \ref{decrease}. By Theorem \ref{main theorem}, we have $h_0(M) \leqslant t_0(M)+t_1(M)-1$. Hence, \eqref{id2} follows.
\end{proof}

\begin{remark}
The finiteness of injective dimension of finitely generated VI-modules over fields of characteristic zero was proved in \cite[Theorem 5.25]{nagpal} but no explicit upper bound was given there.
\end{remark}

\section{Bounds on local cohomology}

In this section, we give upper bounds for degrees of the local cohomology of finitely generated VI-modules. We first recall, for any finitely generated $\VI$-module $M$, the construction of a complex $I^\bullet$ in the proof of \cite[Theorem 4.34(b)]{nagpal}.

Let $I^0=M$ and let $C^0=M$. For each $i \geqslant 1$:
\begin{itemize}
\item
choose a nonzero $X_i\in \Ob(\VI)$ such that $\bar\Sigma^{X_i} C^{i-1}$ is semi-induced;

\item
let $I^i = \bar\Sigma^{X_i} C^{i-1}$ and let $C^i = \bar\Delta^{X_i} C^{i-1}$;

\item
let $d^{i-1} : I^{i-1}\to I^i$ be the composition of the natural maps: $I^{i-1} \to C^{i-1} \to I^i$.
\end{itemize}
The existence of $X_1, X_2, \ldots$ is guaranteed by \cite[Theorem 4.34(a)]{nagpal}. By construction, $I^i$ is semi-induced for every $i\geqslant 1$. By Lemma \ref{semi-induced}, we have an exact sequence
\begin{equation*}
0 \to \Gamma(C^{i-1}) \to  C^{i-1} \to I^i \to C^i \to 0.
\end{equation*}
Hence, $d^i \circ d^{i-1} = 0$ and $\HH^i(I^\bullet) = \Gamma(C^i)$.

We collect here some inequalities which we shall use in the proof of Theorem \ref{complex bound} below.
By \cite[Corollary 4.19]{nagpal}, we have
\begin{equation} \label{ineq 10}
t_0 (C^i) \leqslant t_0(M)-i \quad \mbox{ for each } i\geqslant 0.
\end{equation}
Hence, $C^i=0$ when $i> t_0(M)$, and $I^i = 0$ when $i>t_0(M)+1$. In particular, $I^\bullet$ is a finite complex. By Lemma \ref{decrease}, we also have $t_1(C^i) \leqslant t_1(C^{i-1})-1$ for each $i\geqslant 1$,  and hence
\begin{equation} \label{ineq for t1 of c}
t_1(C^i) \leqslant t_1(M)-i \quad \mbox{ for each } i\geqslant 0.
\end{equation}
By \cite[Proposition 5.1]{nagpal}, we have:
\begin{equation} \label{bound with delta}
t_0(C^i) \leqslant t_0(I^i) = \delta(I^i) = \delta(C^{i-1})
\leqslant \delta(M) - i+1 \quad \mbox{ for each } i \geqslant 1.
\end{equation}

Each cohomology $\HH^i(I^\bullet)$ is a $\VI$-module. The following theorem establishes an upper bound for the degree of the $\VI$-module $\HH^i(I^\bullet)$. The FI analogue of Theorem \ref{complex bound}(a) was proved in \cite[Proposition 3.1]{li2} (with a slight difference because a different indexing was used there), and the FI analogue of Theorem \ref{complex bound}(b) was proved in \cite[Theorem 2.10]{cmnr}.

\begin{theorem} \label{complex bound}
Let $M$ be a finitely generated $\VI$-module, and let $I^\bullet$ be the complex constructed above. Then:
\begin{enumerate}[(a)]
\item
\begin{align*}
\deg \HH^0(I^\bullet) &= h_0(M),\\
\deg \HH^i(I^\bullet) &\leqslant
\min\{ 2t_0(M)-2i,\, t_0(M)+t_1(M) -2i - 1 \}\quad \mbox{ for each } i\geqslant 1.
\end{align*}

\item
\[
\deg \HH^i(I^\bullet) \leqslant
\left\{ \begin{array}{ll}
 \delta(M) + t_0(M) - 1 & \mbox{ if } i=1, \\
 2\delta(M) -2i +2 & \mbox{ if } i\geqslant 2.
\end{array}\right.
\]
\end{enumerate}
\end{theorem}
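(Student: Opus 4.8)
The plan is to establish both parts by analyzing the finite complex $I^\bullet$ together with the exact sequences
\begin{equation*}
0 \to \Gamma(C^{i-1}) \to C^{i-1} \to I^i \to C^i \to 0
\end{equation*}
that were derived just before the statement, recalling that $\HH^i(I^\bullet) = \Gamma(C^i)$. The starting point for part (a) is the identity $\deg\HH^0(I^\bullet) = \deg\Gamma(C^0) = \deg\Gamma(M) = h_0(M)$, which is immediate from $C^0 = M$. For $i \geqslant 1$, the key is to bound $h_0(C^i) = \deg\Gamma(C^i)$ via Theorem~\ref{main theorem}, which gives $h_0(C^i) \leqslant t_0(C^i) + t_1(C^i) - 1$. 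I would then feed in the two inequalities \eqref{ineq 10} and \eqref{ineq for t1 of c}, namely $t_0(C^i) \leqslant t_0(M) - i$ and $t_1(C^i) \leqslant t_1(M) - i$, to get $h_0(C^i) \leqslant t_0(M) + t_1(M) - 2i - 1$. For the other half of the minimum, I would instead combine $h_0(C^i) \leqslant t_0(C^i) + t_1(C^i) - 1$ with the observation (from Lemma~\ref{semi-induced}, applied at stage $i$, since $\bar\Sigma^{X_i} C^{i-1}$ is semi-induced) that $t_1(C^i) = t_1(\bar\Delta^{X_i} C^{i-1}) \leqslant t_0(C^{i-1}) \leqslant t_0(M) - (i-1)$, so that $h_0(C^i) \leqslant (t_0(M)-i) + (t_0(M)-i+1) - 1 = 2t_0(M) - 2i$. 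Taking the smaller of the two bounds yields part (a).

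For part (b), the strategy is the same but uses the stable-degree estimates \eqref{bound with delta} in place of the $t_0$-estimates, and one must be slightly more careful because $\delta$ bounds only kick in for $i \geqslant 1$. For $i = 1$: $\HH^1(I^\bullet) = \Gamma(C^1)$ with $h_0(C^1) \leqslant t_0(C^1) + t_1(C^1) - 1$, and here $t_1(C^1) \leqslant t_0(C^0) = t_0(M)$ by Lemma~\ref{semi-induced} while $t_0(C^1) \leqslant \delta(M)$ by \eqref{bound with delta}, giving $h_0(C^1) \leqslant \delta(M) + t_0(M) - 1$. For $i \geqslant 2$: I would use $h_0(C^i) \leqslant t_0(C^i) + t_1(C^i) - 1$ with $t_0(C^i) \leqslant \delta(M) - i + 1$ from \eqref{bound with delta} and the bound $t_1(C^i) \leqslant t_0(C^{i-1}) \leqslant \delta(M) - i + 2$ (the latter again from \eqref{bound with delta} since $i - 1 \geqslant 1$), which together give $h_0(C^i) \leqslant (\delta(M)-i+1) + (\delta(M)-i+2) - 1 = 2\delta(M) - 2i + 2$.

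The main subtlety to get right is the interplay between two different ways of bounding $t_1(C^i)$: the "decrease" bound $t_1(C^i) \leqslant t_1(M) - i$ from iterating Lemma~\ref{decrease}, versus the "semi-induced" bound $t_1(C^i) \leqslant t_0(C^{i-1})$ from Lemma~\ref{semi-induced}; each produces one term of the minimum in part (a), and only the second is relevant for part (b). I would be careful that Lemma~\ref{semi-induced} applies at each stage because $\bar\Sigma^{X_i} C^{i-1}$ was chosen semi-induced, and that when I invoke it for $t_1(\bar\Delta^{X_i} C^{i-1}) \leqslant t_0(C^{i-1})$ I am reading off exactly the second displayed inequality of that lemma with $M$ replaced by $C^{i-1}$. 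There is no deep obstacle here — everything reduces to bookkeeping with the inequalities collected before the theorem plus one application of Theorem~\ref{main theorem} to each $C^i$ — but the indexing (whether a bound is available at step $i$ or only at step $i-1$, and the off-by-one in \eqref{bound with delta}) is where care is needed.
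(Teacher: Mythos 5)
Your proposal is correct and follows essentially the same route as the paper's proof: compute $\HH^i(I^\bullet)=\Gamma(C^i)$, apply Theorem \ref{main theorem} to get $h_0(C^i)\leqslant t_0(C^i)+t_1(C^i)-1$, and then substitute the two competing bounds on $t_1(C^i)$ (from iterating Lemma \ref{decrease} versus from Lemma \ref{semi-induced}) together with \eqref{ineq 10} and \eqref{bound with delta}. The indexing and the $i=1$ versus $i\geqslant 2$ case split in part (b) are handled exactly as in the paper.
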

\begin{proof}
(a)
We have $\deg \HH^0(I^\bullet) = \deg \Gamma(C^0) = h_0(M)$. For each $i\geqslant 1$, we have:
\begin{align*}
\deg \HH^i(I^\bullet) &= \deg \Gamma(C^i)  \\
&= h_0(C^i)   \\
&\leqslant t_0(C^i) + t_1(C^i) - 1 \qquad \mbox{(by Theorem \ref{main theorem}).}
\end{align*}
By Lemma \ref{semi-induced} and \eqref{ineq 10}, we have:
\begin{gather*}
t_0(C^i) \leqslant t_0(M)-i,  \\
t_1(C^i) \leqslant t_0(C^{i-1}) \leqslant t_0(M) - i+1.
\end{gather*}
Hence,
\[ \deg \HH^i(I^\bullet) \leqslant (t_0(M)-i) + (t_0(M)-i+1) - 1 = 2t_0(M)-2i. \]
Also, by \eqref{ineq 10} and \eqref{ineq for t1 of c},
\[ \deg \HH^i(I^\bullet) \leqslant (t_0(M) - i) + (t_1(M) - i) -1 = t_0(M) + t_1(M) -2i -1. \]

(b) By above and \eqref{bound with delta}, we have:
\begin{align*}
\deg \HH^i(I^\bullet) &\leqslant t_0(C^i) + t_0(C^{i-1}) -1 \\
&\leqslant
\left\{ \begin{array}{ll}
 \delta(M) + t_0(M) - 1 & \mbox{ if } i=1, \\
 2\delta(M) -2i +2 & \mbox{ if } i\geqslant 2.
\end{array}\right.
\end{align*}
\end{proof}

Nagpal \cite{nagpal} proved that the complex $I^\bullet$ computes the local cohomology of $M$. (For $\FI$-modules, this was proved by Ramos and the second author in \cite{lr}.) Therefore, one has:

\begin{corollary}
Let $M$ be a finitely generated $\VI$-module. Then:
\begin{align*}
h_i(M) & \leqslant
\min\{ 2t_0(M)-2i,\, t_0(M)+t_1(M) -2i - 1 \}\quad \mbox{ for each } i\geqslant 1,\\
h_i(M) & \leqslant
\left\{ \begin{array}{ll}
 \delta(M) + t_0(M) - 1 & \mbox{ if } i=1, \\
 2\delta(M) -2i +2 & \mbox{ if } i\geqslant 2.
\end{array}\right.
\end{align*}
\end{corollary}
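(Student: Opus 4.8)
The plan is to deduce the corollary immediately from Theorem \ref{complex bound} by invoking Nagpal's result that the finite complex $I^\bullet$ constructed above computes the local cohomology of $M$, i.e.\ $\RR^i\Gamma(M) \cong \HH^i(I^\bullet)$ for every $i$. Granting this identification, the degree bounds for $\HH^i(I^\bullet)$ established in Theorem \ref{complex bound}(a) and (b) transfer verbatim to $h_i(M) = \deg \RR^i\Gamma(M)$ for $i \geqslant 1$, and this is exactly the content of the two displayed inequalities in the statement.

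First I would recall precisely why $I^\bullet$ computes local cohomology: by construction each $I^i$ for $i \geqslant 1$ is semi-induced, and (when $\kk$ is a field of characteristic zero, or more generally in the relevant setting of \cite{nagpal}) semi-induced modules are $\Gamma$-acyclic, while $I^0 = M$ differs from a $\Gamma$-acyclic module only by its torsion part $\Gamma(M)$, which is accounted for by the fact that $\HH^0(I^\bullet) = \Gamma(C^0) = \Gamma(M)$. Thus $I^\bullet$ is not literally an injective resolution but it is a complex of $\Gamma$-acyclic modules in positive degrees whose zeroth cohomology is $\Gamma(M)$; Nagpal's argument (the analogue for FI being \cite{lr}) shows such a complex computes $\RR^\bullet\Gamma$. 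I would cite \cite{nagpal} for this and not reprove it, consistent with the paper's stated non-self-contained convention.

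Then the proof is a one-line invocation: apply Theorem \ref{complex bound}, replacing each occurrence of $\deg\HH^i(I^\bullet)$ by $h_i(M)$. No case analysis beyond what Theorem \ref{complex bound} already handles is needed, and there is nothing to combine or optimize since the two families of bounds (the one in terms of $t_0(M), t_1(M)$ and the one in terms of $\delta(M)$) are simply inherited side by side.

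The only genuine obstacle is the cited input itself — verifying that Nagpal's complex really does compute local cohomology in the full generality of a commutative Noetherian $\kk$ with $q$ invertible (rather than only over a field of characteristic zero). Since the paper explicitly attributes this to \cite{nagpal} (Theorem 4.34(b) and the surrounding discussion) and treats it as established, I would treat it as a black box; the remaining work is purely bookkeeping and requires no new estimates.
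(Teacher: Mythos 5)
Your proposal is correct and follows exactly the paper's route: cite Nagpal's result that $\RR^i\Gamma(M) \cong \HH^i(I^\bullet)$ (the paper uses \cite[Corollary 5.10]{nagpal}) and then read off the bounds from Theorem \ref{complex bound}. Nothing further is needed.
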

\begin{proof}
By \cite[Corollary 5.10]{nagpal}, we have $\RR^i \Gamma(M) = \HH^i (I^\bullet)$. The corollary is now immediate from Theorem \ref{complex bound}.
\end{proof}

\begin{remark}
The finiteness of $h_i(M)$ was established in \cite[Theorem 5.9]{nagpal}. In \cite[Question 1.12]{nagpal}, Nagpal asked whether there exists an upper bound of $h_i(M)$ in terms of $t_0(M)$ and $t_1(M)$. The above corollary answers this question for finitely generated modules.
\end{remark}

By \cite[Remark 5.5]{nagpal}, the $\VI$-module $\Sigma^X M$ is semi-induced if and only if $\dim_F(X) \geqslant \max \{ h_i(M) + 1 \mid i \geqslant 0 \}$, which is a finite number since $h_i(M) = -\infty$ when $i \gg 0$. Since it is not easy to find $h_i(M)$, the following corollary is more useful in practice.

\begin{corollary} \label{bound for shift}
Let $M$ be a finitely generated $\VI$-module and $X\in \Ob(\VI)$. Then $\Sigma^X M$ is semi-induced if
\begin{equation} \label{lower bound 1}
\dim_F(X)  \geqslant  \max\{h_0(M) + 1, \, \min\{ 2t_0(M) - 1 , \, t_0(M) + t_1(M) -2 \} \},
\end{equation}
or if
\begin{equation} \label{lower bound 2}
\dim_F(X) \geqslant t_0(M) + t_1(M).
\end{equation}
\end{corollary}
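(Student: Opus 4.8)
The plan is to reduce everything to the criterion recalled just above, namely that $\Sigma^X M$ is semi-induced if and only if $\dim_F(X) \geqslant \max\{h_i(M)+1 \mid i \geqslant 0\}$, and then bound the right-hand side using the inequalities already established. First I would observe that $\max\{h_i(M)+1 \mid i \geqslant 0\} = \max\{h_0(M)+1,\ \max\{h_i(M)+1 \mid i \geqslant 1\}\}$. For the $i \geqslant 1$ terms, the Corollary preceding this one gives $h_i(M) \leqslant \min\{2t_0(M)-2i,\ t_0(M)+t_1(M)-2i-1\}$; since $i \geqslant 1$ this is at most $\min\{2t_0(M)-2,\ t_0(M)+t_1(M)-3\}$, so $h_i(M)+1 \leqslant \min\{2t_0(M)-1,\ t_0(M)+t_1(M)-2\}$ for every $i \geqslant 1$. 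Combining, $\max\{h_i(M)+1 \mid i \geqslant 0\} \leqslant \max\{h_0(M)+1,\ \min\{2t_0(M)-1,\ t_0(M)+t_1(M)-2\}\}$, which is exactly the right-hand side of \eqref{lower bound 1}. Hence condition \eqref{lower bound 1} implies $\Sigma^X M$ is semi-induced.

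For the bound \eqref{lower bound 2}, I would show that the right-hand side of \eqref{lower bound 1} is always $\leqslant t_0(M) + t_1(M)$, so that \eqref{lower bound 2} implies \eqref{lower bound 1} and we are done by the first part. Indeed, $h_0(M)+1 \leqslant t_0(M)+t_1(M)$ by Theorem \ref{main theorem}, and the other term $\min\{2t_0(M)-1,\ t_0(M)+t_1(M)-2\} \leqslant t_0(M)+t_1(M)-2 < t_0(M)+t_1(M)$; taking the max of the two still gives something strictly below $t_0(M)+t_1(M)$. So $\dim_F(X) \geqslant t_0(M)+t_1(M)$ forces $\dim_F(X) \geqslant$ the right-hand side of \eqref{lower bound 1}, and the claim follows. (One should check the degenerate cases where $M$ is zero or torsion, so that some $t_i(M) = -\infty$; in those cases the asserted lower bounds are $-\infty$ or small and $\Sigma^X M$ is semi-induced for all $X$, or the statement is vacuous, so nothing goes wrong.)

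The only mild subtlety — and the closest thing to an obstacle — is matching the indexing and the $-\infty$ conventions: the inequalities from the preceding Corollary are stated for $i \geqslant 1$, and I must be careful that the reduction $h_i(M)+1 \leqslant \min\{2t_0(M)-1, t_0(M)+t_1(M)-2\}$ genuinely holds for all such $i$ (it does, since decreasing $i$ only increases the bound, and $i=1$ gives the maximal value). Everything else is bookkeeping with maxima and mins, so this is essentially immediate once the right criterion from \cite[Remark 5.5]{nagpal} and Theorem \ref{complex bound} (equivalently the preceding Corollary) are invoked.
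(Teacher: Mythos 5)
Your argument is correct, and the numerical bookkeeping coincides exactly with the paper's; only the logical route differs slightly. The paper does not pass through the criterion of \cite[Remark 5.5]{nagpal}: instead it applies $\Sigma^X$ to the finite complex $I^\bullet$ of Theorem \ref{complex bound}, observes that hypothesis \eqref{lower bound 1} forces $\Sigma^X$ to annihilate every cohomology module $\HH^i(I^\bullet)$ (their degrees being $h_0(M)$ for $i=0$ and at most $\min\{2t_0(M)-2i,\, t_0(M)+t_1(M)-2i-1\}$ for $i\geqslant 1$), so that $\Sigma^X I^\bullet$ is an exact complex whose terms in positive cohomological degree are semi-induced by \cite[Corollary 4.4]{nagpal}, and then concludes that $\Sigma^X M=\Sigma^X I^0$ is semi-induced by \cite[Corollary 4.23]{nagpal}. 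You instead quote the if-and-only-if criterion $\dim_F(X)\geqslant \max\{h_i(M)+1 \mid i\geqslant 0\}$ together with the corollary bounding the $h_i(M)$, which itself rests on the identification $\RR^i\Gamma(M)=\HH^i(I^\bullet)$ from \cite[Corollary 5.10]{nagpal}; so your proof leans on two further results of Nagpal where the paper's is self-contained given Theorem \ref{complex bound}. Both are valid: yours is shorter, the paper's avoids the local-cohomology interpretation of $I^\bullet$. Your treatment of \eqref{lower bound 2} is identical to the paper's. One trivial slip: the maximum in \eqref{lower bound 1} need not be \emph{strictly} below $t_0(M)+t_1(M)$, since $h_0(M)+1$ may equal $t_0(M)+t_1(M)$; but only the non-strict inequality is needed, so nothing breaks.
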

\begin{proof}
First, suppose that \eqref{lower bound 1} holds. By Theorem \ref{complex bound}, the complex $\Sigma^X I^\bullet$ is exact. For each $i\geqslant 1$, the $\VI$-module $I^i$ is semi-induced, so by \cite[Corollary 4.4]{nagpal}, the $\VI$-module $\Sigma^X I^i$ is semi-induced. By \cite[Corollary 4.23]{nagpal}, it follows that $\Sigma^X M=\Sigma^X I^0$ is semi-induced.

Next, suppose that \eqref{lower bound 2} holds. By Theorem \ref{main theorem}, we have $t_0(M)+t_1(M) \geqslant h_0(M)+1$, so we are done by \eqref{lower bound 1}.
\end{proof}

\begin{remark} \label{polynomial growth}
Suppose that $\kk$ is a field. Let $M$ be a finitely generated $\VI$-module. It was shown in the proof of \cite[Theorem 5.4]{nagpal} that if $X\in \Ob(\VI)$ and $\Sigma^X M$ is semi-induced, then there is a polynomial $P$ of degree $t_0(\Sigma^X M)$  such that $\dim_\kk (M_n) = P(q^n)$ if  $n\geqslant \dim_F(X)$. By Corollary \ref{bound for shift}, we deduce that $\dim_\kk (M_n) = P(q^n)$ if  $n  \geqslant t_0(M) + t_1(M)$.
\end{remark}

\begin{remark} \label{rep stability}
Suppose that $\kk$ is an algebraically closed field of characteristic zero.  Let $M$ be a finitely generated $\VI$-module, and choose $X\in \Ob(\VI)$ such that $\Sigma^X M$ is semi-induced. The proof of \cite[Theorem 5.14]{nagpal} shows that $M$ (or more precisely, the consistent sequence $M_0\to M_1 \to M_2 \to \cdots$ determined by $M$ and the standard inclusion maps $F^0 \hookrightarrow F^1 \hookrightarrow F^2 \hookrightarrow \cdots$) is representation stable in the sense of \cite[Definition 1.5]{gw} starting at $\max\{ \dim_F(X), 2t_0(M)  \}$. By Corollary \ref{bound for shift}, we deduce that $M$ is representation stable starting at  $\max\{ t_0(M) + t_1(M), 2t_0(M) \}$. We point out that the proof of \cite[Theorem 5.14]{nagpal} uses Pieri's formula which shows that any induced module $\II(V)$ is representation stable starting at $2\deg(V)$.

\end{remark}

\end{document}